\newtheorem{theorem}{Theorem}[section]
\newtheorem{lemma}[theorem]{Lemma}
\newtheorem{proposition}[theorem]{Proposition}
\newtheorem{corollary}[theorem]{Corollary}
\theoremstyle{definition}
\newtheorem{definition}[theorem]{Definition}
\newtheorem{remark}[theorem]{Remark}
\newtheorem{example}[theorem]{Example}
\newtheorem{question}[theorem]{Question}
\DeclareMathOperator{\Reg}{Reg}
\DeclareMathOperator{\VN}{VN}
\DeclareMathOperator{\id}{id}
\DeclareMathOperator{\spn}{span}
\newcommand{\schur}{\mathbin{\star}}
\newcommand{\pw}{\mathbin{\star}}
\newcommand\Gr{\mathop{\rm Gr}}
\newcommand\cb{\mathop{\rm cb}}
\newcommand\eh{\mathop{\rm eh}}
\let\nph\varphi
\let\varphi\phi
\let\phi\nph
\def\dc#1{\expandafter\def\csname#1\endcsname{\mathcal{#1}}}
\def\db#1{\expandafter\def\csname b#1\endcsname{\mathbb{#1}}}
\def\df#1{\expandafter\def\csname f#1\endcsname{\mathfrak{#1}}}
\def\loopy#1#2{%
  \def#1##1{\def\next{#2{##1}#1}\ifx##1\relax\let\next\relax\fi\next}}
\loopy{\makemathcals}{\dc}
\loopy{\makemathbbs}{\db}
\loopy{\makemathfraks}{\df}
\let\sect\S
\newcommand{\qtext}[1]{\quad\text{#1}\quad}
\newcommand{\qand}{\qtext{and}}
\begin{document}

\title{Schur multipliers of Cartan pairs}

\author{R.~H.~Levene, N.~Spronk, I.~G.~Todorov and L.~Turowska}

\address{School of Mathematical Sciences, University College Dublin, Belfield, Dublin 4, Ireland}
\email{rupert.levene@ucd.ie}

\address{Department of Pure Mathematics, University of Waterloo, Waterloo, Ontario N2L 3G1, Canada}
\email{nspronk@uwaterloo.ca}

\address{Pure Mathematics Research Centre, Queen's University Belfast, Belfast BT7 1NN, United Kingdom}
\email{i.todorov@qub.ac.uk}

\address{
  Department of Mathematics, Chalmers University of Technology and the University of Gothenburg,
Sweden}
\email{turowska@chalmers.se}

\date{9 September 2014
  \\\indent 2010~\emph{Mathematics subject classification:} Primary 46L10, Secondary 46L07.
  \\\indent \emph{Keywords:} Schur multiplier, Cartan masa, bimodule map} %

\begin{abstract}
  We define the Schur multipliers of a separable von Neumann
  algebra~$\M$ with Cartan masa~$\A$, generalising the classical Schur
  multipliers of~$\B(\ell^2)$.  We characterise these as the normal
  $\A$-bimodule maps on $\M$. If $\M$ contains a direct summand
  isomorphic to the hyperfinite II$_1$ factor, then we show that the
  Schur multipliers arising from the extended Haagerup tensor product
  $\A\otimes_{\eh}\A$ are strictly contained in the algebra of all
  Schur multipliers.
\end{abstract}

\maketitle
\setcounter{tocdepth}{1}
\tableofcontents

\section{Introduction}\label{s_intro}

Let~$\B(\ell^2)$ denote the space of bounded linear operators
on~$\ell^2$. The Schur multipliers of $\B(\ell^2)$ have attracted
considerable attention in the literature. These are the (necessarily
bounded) maps of the form \[M(\phi)\colon \B(\ell^2)\to
\B(\ell^2),\quad T\mapsto \phi\ast T\] where $\phi =
(\phi(i,j))_{i,j\in \bN}$ is a fixed matrix with the property that the
Schur, or entry-wise, product $\phi\ast T$ is in~$\B(\ell^2)$ for
every~$T\in \B(\ell^2)$. Here we identify operators in $\B(\ell^2)$
with matrices indexed by $\bN\times\bN$ in a canonical way.  It is
well-known that if~$\phi$ is itself the matrix of an element
of~$\B(\ell^2)$, then $M(\phi)$ is a Schur multiplier, but that not
every Schur multiplier of~$\B(\ell^2)$ arises in this way.

In fact~\cite{pa}, Schur multipliers are precisely the normal
(weak*-weak* continuous) $\D$-bimodule maps on $\B(\ell^2)$,
where~$\D$ is the maximal abelian selfadjoint algebra, or masa,
consisting of the operators in~$\B(\ell^2)$ whose matrix is
diagonal. By a result of R. R. Smith~\cite{smith}, each of these maps
has completely bounded norm equal to its norm as linear
map on~$\B(\ell^2)$. Moreover, it follows from a classical
result of A.~Grothendieck~\cite{Gro} that the space of Schur multipliers
of~$\B(\ell^2)$ can be identified with~$\D\otimes_{\eh}\D$,
where~$\otimes_{\eh}$ is the weak* (or extended) Haagerup tensor
product introduced by D.~P.~Blecher and R.~R.~Smith
in~\cite{blecher_smith}.

Recall \cite[Definition~3.1]{fm2} that a masa~$\A$ in a von Neumann
algebra~$\M$ is a Cartan masa if there is a faithful normal
conditional expectation of~$\M$ onto~$\A$, and the set of unitary
normalizers of~$\A$ in~$\M$ generates~$\M$.

Let $\R$ be the hyperfinite {\rm II}$_1$-factor. For each Cartan
masa~$\A\subseteq \R$, F.~Pop and R.~R.~Smith defined a Schur product
$\schur_\A\colon \R\times \R\to \R$ using the Schur products of finite
matrices and approximation techniques~\cite{ps}. Using this product,
they showed that every bounded $\A$-bimodule map $\R\to\R$ is
completely bounded, with completely bounded norm equal to its norm.
The separable von Neumann algebras~$\M$ containing a Cartan masa~$\A$
were coordinatised by J.~Feldman and C.~C.~Moore~\cite{fm1,fm2}. We use
this coordinatisation to define the Schur multipliers of~$(\M,\A)$. 
Our definition
generalises the classical notion of a Schur multiplier
of~$\B(\ell^2)$, and for~$\M=\R$ and certain
masas~$\A\subseteq \R$, our definition of Schur
multiplication extends the Schur product~$\schur_\A$ of~\cite{ps}.

In fact, the Schur multipliers of~$\M$ turn out to be the adjoints of
the multipliers of the Fourier algebra of the groupoid underlying the
von Neumann algebra~$\M$ (see~\cite{rbook,r}). Our focus, however, is
on algebraic properties such as idempotence, characterisation problems
and connections with operator space tensor products, so we restrict
our attention to Schur multipliers of von Neumann algebras with Cartan
masas.

Our main results are as follows. Let~$\M$ be a separable von Neumann
algebra with a Cartan masa~$\A$.  After defining the Schur multipliers
of~$(\M,\A)$, we show in Theorem~\ref{th_main} that these are
precisely the normal $\A$-bimodule maps $\M\to \M$, generalising the
well-known result for $\M=\B(\ell^2)$, $\A=\D$.  However,
if~$\M\ne\B(\ell^2)$, then the extended Haagerup tensor product
$\A\otimes_{\eh}\A$ need not exhaust the Schur multipliers; indeed we
show in that if~$\M$ contains a direct summand isomorphic to~$\R$,
then $\A\otimes_{\eh}\A$ does not contain every Schur multiplier
of~$\M$. This is perhaps surprising, since in~\cite{ps} Pop and Smith
show that every (completely) bounded $\A$-bimodule map on~$\R$ is the
weak* pointwise limit of transformations corresponding to elements of
$\A\otimes_{\eh}\A$.  Our result is a corollary to
Theorem~\ref{th_ch}, in which we show that there are no non-trivial
idempotent Schur multipliers of Toeplitz type on~$\R$ that come from
$\A\otimes_{\eh}\A$.

\subsection*{Acknowledgements}
The authors are grateful to Adam Fuller and David Pitts for providing
Remark~\ref{r_autcb} and drawing our attention to~\cite{cpz}. We also
wish to thank Jean Renault for illuminating discussions during the
preparation of this paper.

\section{Feldman-Moore relations and Cartan pairs}\label{s_prel}

Here we recall some preliminary notions and results from the work of
Feldman and Moore~\cite{fm1,fm2}. Throughout, let~$X$ be a set and
let~$R\subseteq X\times X$ be an equivalence relation on~$X$. We write
$x\sim y$ to mean that $(x,y)\in R$. 
For $n\in \bN$ with $n\ge2$, we
write
\[R^{(n)}=\{(x_0,x_1,\dots,x_n)\in X^{n+1}\colon x_0\sim
x_1\sim\dots\sim x_n\}.\] The $i$th coordinate projection of~$R$
onto~$X$ will be written as $\pi_i\colon R\to X$, $(x_1,x_2)\mapsto
x_i$.

\begin{definition}
  A map~$\sigma\colon
  R^{(2)}\to \bT$ is a \emph{$2$-cocycle on~$R$} if
  \[
  \sigma(x,y,z)\sigma(x,z,w)=\sigma(x,y,w)\sigma(y,z,w)
  \] for %
  all $(x,y,z,w)\in R^{(3)}$.  We say~$\sigma$
  is \emph{normalised} if~$\sigma(x,y,z)=1$ whenever two of $x$, $y$
  and~$z$ are equal. By \cite[Proposition~7.8]{fm1}, any normalised
  $2$-cocycle $\sigma$ is \emph{skew-symmetric}: for every
  permutation~$\pi$ on three elements,
  \[\sigma(\pi(x,y,z))=
  \begin{cases}
    \sigma(x,y,z)&\text{if $\pi$ is even},\\
    \sigma(x,y,z)^{-1}&\text{if $\pi$ is odd}.
  \end{cases}\]
\end{definition}

\begin{definition}
  An equivalence relation~$R$ on~$X$ is \emph{countable} if for every
  $x\in X$, the equivalence class $[x]_R=\{y\in X\colon x\sim y\}$ is
  countable.
\end{definition}

Now let~$(X,\mu)$ be a standard Borel probability space and suppose
that~$R$ is a countable equivalence relation which is also a Borel
subset of~$X\times X$, when~$X\times X$ is equipped with the product
Borel structure.

\begin{definition}
  For $\alpha\subseteq X$, let~$[\alpha]_R=\bigcup_{x\in \alpha}[x]_R$
  be the $R$-saturation of~$\alpha$. We say that~$\mu$ is
  \emph{quasi-invariant under~$R$} if
  \[ \mu(\alpha)=0\iff \mu([\alpha]_R)=0\] for any measurable
  set~$\alpha\subseteq X$.
\end{definition}

\begin{definition}
  We say that~$(X,\mu,R,\sigma)$ is a \emph{Feldman-Moore relation} if
  $(X,\mu)$ is a standard Borel probability space, $R$ is a countable
  Borel equivalence relation on~$X$ so that~$\mu$ is quasi-invariant
  under~$R$, and~$\sigma$ is a normalised $2$-cocycle on~$R$. When the
  context makes this unambiguous, for brevity we will simply refer to
  this Feldman-Moore relation as~$R$.
\end{definition}

Fix a Feldman-Moore relation~$(X,\mu,R,\sigma)$.

\begin{definition}
  Let~$E\subseteq R$ and let~$x,y\in X$. The horizontal slice of~$E$
  at~$y$ is
  \[ %
  E_y=\{z\in X\colon (z,y)\in E\}\times \{y\}\]
  and the vertical slice of~$E$ at~$x$ is
  \[E^x=\{x\}\times \{z\in X\colon (x,z)\in E\}.\] %
  We define
  \[ \bB(E)=\sup_{x,y\in X} |E_x|+|E^y|,\] and say that~$E$ is
  \emph{band limited} if~$\bB(E)<\infty$. We call a bounded Borel
  function~$a\colon R\to \bC$ \emph{left finite} if the support of~$a$
  is band limited, and we write
  \[ \Sigma_0=\Sigma_0(R)\] for the set of all such left finite
  functions on~$R$.
\end{definition}

\begin{definition}
  Equip~$R$ with the relative Borel structure from~$X\times X$. The
  \emph{right counting measure} for~$R$ is the measure~$\nu$ on~$R$
  defined by
  \[ \nu(E)=\int_X |E_y|\,d\mu(y)\] for each measurable
  set~$E\subseteq R$.  
\end{definition}

We shall also need a generalisation of the counting measure
$\nu$. For~$n\ge2$, let $\pi_{n+1}$ be the projection of $R^{(n)}$
onto~$X$ defined by 
$\pi_{n+1}(x_0,x_1,\ldots, x_n)=x_{n}$, and let~$\nu^{(n)}$ be the
measure on~$R^{(n)}$ given by
\[\nu^{(n)}(E)=\int_X|\pi_{n+1}^{-1}(y)\cap E|\,d\mu(y).\]

Now consider the Hilbert space~$H=L^2(R,\nu)$, where~$\nu$ is the right
counting measure of~$R$.

\begin{definition}
  We define a linear map
  \[ L_0\colon \Sigma_0\to \B(H),\qquad L_0(a)\xi:=a*_\sigma \xi\]
  for $a\in \Sigma_0$ and $\xi\in H$, where
  \begin{equation} a *_\sigma \xi(x,z)=\sum_{y\sim x}
    a(x,y)\xi(y,z)\sigma(x,y,z),\quad\text{for~$(x,z)\in
      R$} \label{eq:starsigma}.\end{equation} As shown in~\cite{fm2},
  this defines a bounded linear operator $L_0(a)\in \B(H)$ with
  $\|L_0(a)\|\leq \bB(E)\|a\|_\infty$, where $E$ is the support
  of~$a$.
\end{definition}

\begin{definition}
  We define
  \[ \M_0(R,\sigma)= L_0(\Sigma_0)\] to be the range
  of~$L_0$. %
\end{definition}

\begin{definition}
  The von Neumann algebra~$\M(R,\sigma)$ of the Feldman-Moore
  relation $(X,\mu,R,\sigma)$ is
  the von Neumann subalgebra of~$\B(H)$ generated
  by~$\M_0(R,\sigma)$. We will abbreviate this as~$\M(R)$ or
  simply~$\M$ where the context allows.
\end{definition}

  Let~$\Delta=\{(x,x)\colon x\in X\}$ be the diagonal of~$R$, and let
  $\chi_\Delta\colon R\to \bC$ be the characteristic function
  of~$\Delta$. Note that $\chi_\Delta$ is a unit vector in~$H$, since
  $\nu(\Delta)=\mu(X)=1$.

\begin{definition}
  The \emph{symbol map} of~$R$ is the map
  \[ s\colon \M\to H,\quad T\mapsto T\chi_\Delta.\]
  The \emph{symbol set} for~$R$ is the range of~$s$:
  \[ \Sigma(R,\sigma)=s(\M).\] We often abbreviate this
  as~$\Sigma(R)$ or~$\Sigma$.
\end{definition}

Since~$\sigma$ is normalised, equation~\eqref{eq:starsigma} gives
\begin{equation}\label{eq_newn}
  s(L_0(a))=a\quad\text{for $a\in \Sigma_0$,} 
\end{equation}
where equality holds almost everywhere. So we may view the Borel
functions~$a\in \Sigma_0$ as elements of~$H=L^2(R,\nu)$.  Moreover,
for~$T\in \M$ we have $\|s(T)\|_{\infty}\leq \|T\|$
by~\cite[Proposition~2.6]{fm2}. Hence
  \begin{equation}\label{eq:sigma0-inclusion}
    \Sigma_0\subseteq \Sigma\subseteq H\cap L^\infty(R,\nu).
  \end{equation}
  \begin{definition}
  By~\cite{fm2}, $s$ is a bijection onto~$\Sigma$, and its inverse
  \[ L\colon \Sigma\to \M\] extends~$L_0$. We call~$L$ the
  \emph{inverse symbol map} of~$R$. In fact, for any $a\in \Sigma$ we
  have $L(a)\xi=a*_\sigma \xi$ where $*_\sigma$ is the
  convolution product  formally defined by equation~\eqref{eq:starsigma}.
\end{definition}

If we equip~$\Sigma$ with the involution $a^*(x,y)=\overline{a(y,x)}$,
the pointwise sum and the convolution product~$*_\sigma$, then $s$ is
a $*$-isomorphism onto~$\Sigma$: for all $a,b\in \Sigma$ and
$\lambda,\mu\in\bC$, we have
\begin{align*}s(L(a)^*)(x,y)&=\overline{a(y,x)},\\ s(L(\lambda
  a)+L(\mu b))&=\lambda a+\mu b\quad\text{and}\\ s(L(a)
  L(b))&=a*_\sigma b.
\end{align*}
This is proven in~\cite{fm2}. By equation~(\ref{eq_newn}),
$\Sigma_0(R)$ is a $*$-subalgebra of~$\Sigma$, so $\M_0(R,\sigma)$ is
a $*$-subalgebra of~$\M(R,\sigma)$.

\begin{definition}
  Given~$\alpha\in L^\infty(X,\mu)$, let~$d(\alpha)\colon R\to \bC$ be given by
  \[ d(\alpha)(x,y)=
  \begin{cases}
    \alpha(x)&\text{if~$x=y$,}\\
    0&\text{otherwise}.
  \end{cases}\] Clearly $d(\alpha)\in \Sigma_0$. We write
  $D(\alpha)=L(d(\alpha))\in\M$, and we define the \emph{Cartan masa
    of~$R$} to be
  \[ \A=\A(R)=\{ D(\alpha)\colon \alpha\in L^\infty(X,\mu)\}.\]
  By~\cite{fm2},~$\A(R)$ is a Cartan masa in the von Neumann
  algebra~$\M(R,\sigma)$.

  Note that if $\xi\in H$ and $(x,y)\in R$, then
  \begin{eqnarray*}
    D(\alpha)\xi (x,y) & = & \sum_{z\sim x} d(\alpha)(x,z)\xi(z,y) \sigma(x,z,y) =
    \alpha(x) \xi(x,y) \sigma(x,x,y)\\
    & = & \alpha(x) \xi(x,y).
  \end{eqnarray*}
  Since this does not depend on the normalised $2$-cocycle ~$\sigma$,
  this shows that~$\A(R)$ does not depend on~$\sigma$.
\end{definition}

\begin{definition}
  If~$\A$ is a Cartan masa in a von Neumann algebra~$\M$, then we say
  that~$(\M,\A)$ is a \emph{Cartan pair}. If~$\M\subseteq \B(H)$
  where~$H$ is a separable Hilbert space, then we say that~$(\M,\A)$
  is a \emph{separably acting} Cartan pair.

  We say that two Cartan pairs~$(\M_1,\A_1)$ and $(\M_2,\A_2)$ are
  isomorphic, and write~$(\M_1,\A_1)\cong(\M_2,\A_2)$, if there is a
  $*$-isomorphism of~$\M_1$ onto~$\M_2$ which carries $\A_1$
  onto~$\A_2$.

  A \emph{Feldman-Moore coordinatisation} of a Cartan pair~$(\M,\A)$
  is a Feldman-Moore relation $(X,\mu,R,\sigma)$ so that
  \[ (\M,\A)\cong (\M(R,\sigma),\A(R)).\]
\end{definition}

\begin{definition}\label{def:isorel}
  For $i=1,2$, let $R_i=(X_i,\mu_i,R_i,\sigma_i)$ be a Feldman-Moore
  relation with right counting measure $\nu_i$.  We say that these are
  isomorphic, and write $R_1\cong R_2$, if there is a Borel
  isomorphism $\rho\colon X_1\to X_2$ so that
  \begin{enumerate}
  \item $\rho_*\mu_1$ is equivalent to~$\mu_2$, where
    $\rho_*\mu_1(E)=\mu_1(\rho^{-1}(E))$ for $E\subseteq X_2$;
  \item $\rho^2(R_1)=R_2$, up to a $\nu_{2}$-null set, where $\rho^2=\rho\times\rho$; and
  \item $\sigma_2(\rho(x),\rho(y),\rho(z))=\sigma_1(x,y,z)$ for
    a.e.~$(x,y,z)\in R_1^{(2)}$ with respect to $\nu_1^{(2)}$. 
  \end{enumerate}
\end{definition}

Our definition of the Schur multipliers of a von Neumann algebra~$\M$
with a Cartan masa~$\A$ will rest on:

\begin{theorem}[The Feldman-Moore coordinatisation~{\cite[Theorem~1]{fm2}}]
  \label{thm:fmii1}
  Every separably acting Cartan pair~$(\M,\A)$ has a Feldman-Moore
  coordinatisation.
  Moreover, if $R_i=(X_i,\mu_i,R_i,\sigma_i)$ is a Feldman-Moore
  coordinatisation of~$(\M_i,\A_i)$ for $i=1,2$, then
  \[ (\M_1,\A_1)\cong (\M_2,\A_2)\iff R_1\cong R_2.\]
\end{theorem}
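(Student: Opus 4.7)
\emph{Strategy and existence.} The plan is to reconstruct the data $(X,\mu,R,\sigma)$ from the intrinsic structure of the Cartan pair $(\M,\A)$. Since $\A$ is a separable abelian von Neumann algebra, the spectral theorem realises $\A \cong L^\infty(X,\mu)$ for some standard Borel probability space. The equivalence relation should encode the action of the partial-isometry normalizers of $\A$: setting
\[ G\N(\A) = \{v \in \M : v \text{ is a partial isometry with } v\A v^*,\ v^*\A v \subseteq \A\}, \]
each $v \in G\N(\A)$ induces a partial Borel isomorphism $\alpha_v$ of $X$ between the supports of $v^*v$ and $vv^*$. Because $\A$ is Cartan, $G\N(\A)$ generates $\M$, and separability yields a countable family $\{v_n\}$ whose graphs cover an equivalence relation on $X$; I would define $R$ as the union of these graphs and check it is a countable Borel equivalence relation. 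Quasi-invariance of $\mu$ under $R$ follows from the existence of a faithful normal conditional expectation $E\colon \M \to \A$, which forces each $\alpha_v$ to push $\mu$ to a measure equivalent to $\mu$ on its domain.

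\emph{Cocycle and realisation.} Define $\sigma$ as the obstruction to lifting the family $\{\alpha_v\}$ to an honest action on $X$: for composable $v,w \in G\N(\A)$, the product $vw$ normalises $\A$ and implements $\alpha_v\circ\alpha_w$, but only modulo a unitary in $\A$, i.e.\ a measurable $\bT$-valued function on $X$. Assembling these phase corrections on triples yields a normalised Borel $2$-cocycle $\sigma$ on $R$, with the cocycle identity encoding associativity of the product $(v_1v_2)v_3 = v_1(v_2v_3)$. Realisation then proceeds by letting $\omega = \int E(\cdot)\,d\mu$, identifying the GNS representation of $\M$ with respect to $\omega$ with the standard action of $\M(R,\sigma)$ on $L^2(R,\nu)$ via $T\mapsto T\chi_\Delta$; the key input is that the vectors $v_n\chi_\Delta$ are supported on disjoint graphs of the $\alpha_{v_n}$ and densely span $L^2(R,\nu)$.

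\emph{The biconditional.} For ``$\Leftarrow$'', given $\rho$ as in Definition~\ref{def:isorel}, I would define a unitary $U\colon L^2(R_1,\nu_1) \to L^2(R_2,\nu_2)$ by $(U\xi)(x,y) = h(y)^{1/2}\xi(\rho^{-1}(x),\rho^{-1}(y))$, where $h = d(\rho_*\mu_1)/d\mu_2$. A direct computation using clauses (ii) and (iii) shows $U$ intertwines $*_{\sigma_1}$ with $*_{\sigma_2}$ and carries $\A(R_1)$ onto $\A(R_2)$. For ``$\Rightarrow$'', a $*$-isomorphism $\Phi\colon \M_1\to\M_2$ carrying $\A_1$ onto $\A_2$ restricts to a $*$-isomorphism of $L^\infty(X_1)$ onto $L^\infty(X_2)$, which is induced by an essentially unique measure-class-preserving Borel isomorphism $\rho\colon X_1\to X_2$. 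Because $\Phi$ carries $G\N(\A_1)$ onto $G\N(\A_2)$, it follows that $\rho^2(R_1) = R_2$ up to null sets, and the cocycle equality is extracted from the fact that $\Phi$ is multiplicative on triples of normalizers, so it preserves their $\A$-valued phase defects.

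\emph{Main obstacle.} The hardest step will be the existence half, specifically the simultaneous \emph{Borel} measurability of all objects: the relation $R$ itself, the sections $v \mapsto \alpha_v$, and the resulting cocycle $\sigma$, together with verification of the $2$-cocycle identity on a conull Borel subset of $R^{(3)}$ rather than merely pointwise almost everywhere on each triple. A related complication appears in the uniqueness half: different countable generating families for $G\N(\A)$ yield cohomologous but unequal cocycles, so one must be able to absorb any coboundary into $\rho$ itself in order to match cocycles exactly, as Definition~\ref{def:isorel}(iii) demands equality and not mere cohomology.
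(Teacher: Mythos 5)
The paper does not prove this theorem: it is imported wholesale from Feldman--Moore \cite[Theorem~1]{fm2}, so there is no in-paper argument to compare against. Your outline is nevertheless recognisably the Feldman--Moore strategy itself --- realise $\A\cong L^\infty(X,\mu)$, generate $R$ from the graphs of the point maps induced by a countable family of partial-isometry normalizers, extract $\sigma$ as the $\bT$-valued defect of multiplicativity of a chosen section of normalizers, and identify the GNS space of $\int E(\cdot)\,d\mu$ with $L^2(R,\nu)$. The ``$\Leftarrow$'' half of the biconditional is exactly what the paper records separately in Remark~\ref{rk:unitary}, and your unitary is the inverse of the one written there. So the approach is the right one; but as a proof the proposal has real gaps, and they are precisely the steps you defer to the ``main obstacle'' paragraph. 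The measurable selection problems (a Borel partition of $R$ into graphs, a Borel choice of normalizers over that partition, and the verification of the cocycle identity on a conull Borel subset of $R^{(3)}$) are where essentially all of the work in \cite{fm1,fm2} lives; asserting that they can be done is not the same as doing them. Likewise, the claim that the $v_n\chi_\Delta$ densely span $L^2(R,\nu)$ and that the GNS representation is carried onto the regular representation is the substance of the existence half, not a ``key input'' one can cite.

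One further point deserves a warning rather than just a flag. In the uniqueness half you propose to ``absorb any coboundary into $\rho$''; this cannot work. Re-choosing the point isomorphism $\rho$ changes $\sigma_2\circ\rho^{(2)}$ by composition with a relation automorphism, not by multiplication by a coboundary $\partial c(x,y,z)=c(x,y)c(y,z)\overline{c(x,z)}$ for an arbitrary Borel $c\colon R\to\bT$. What is true (and what Feldman--Moore prove) is that an isomorphism of Cartan pairs yields a relation isomorphism $\rho$ together with a function $c$ such that $\sigma_1$ and $\sigma_2\circ\rho^{(2)}$ are \emph{cohomologous} via $c$; exact equality as in Definition~\ref{def:isorel}(iii) is obtained only after one also twists the coordinatisation itself (replacing the chosen section of normalizers by its multiple by $c$), which changes the coordinatisation within its isomorphism class. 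Your sketch as written has no mechanism for this step, so the ``$\Rightarrow$'' direction, in the precise form the paper states it, would not close.
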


\begin{remark}\label{rk:unitary}
  Suppose that we have isomorphic Feldman-Moore relations $R_1$
  and~$R_2$, with an isomorphism~$\rho\colon X_1\to X_2$ as in
  Definition~\ref{def:isorel}.  A calculation shows that if~$h\colon
  X_2\to \bR$ is the Radon-Nikodym derivative of~$\rho_*\mu_1$ with
  respect to~$\mu_2$, then the operator \[U\colon
  L^2(R_2,\nu_2)\to L^2(R_1,\nu_1),\] given for $(x,y)\in
  R_1$ and $f\in L^2(R_2,\nu_2)$ by
  \[U(f)(x,y) = h(\rho(y))^{-1/2} f(\rho(x),\rho(y)),\] 
  is unitary. 
  Moreover, writing $L_i$ for the inverse
  symbol map of~$R_i$, for $a\in \Sigma_0(R_1,\sigma_1)$ we have
  \begin{equation}\label{eq:unitary-action}
    U^*L_1(a)U=L_2( a\circ \rho^{-2})
  \end{equation} 
  where
  \[\rho^{-2}(u,v)=(\rho^{-1}(u),\rho^{-1}(v)),\quad(u,v)\in R_2.\] It
  follows that
  \[U^*\M(R_1,\sigma_1)U=\M(R_2,\sigma_2)\quad\text{and}\quad
  U^*\A(R_1)U=\A(R_2),\] so conjugation by~$U$ implements an isomorphism
  \[ (\M(R_1,\sigma_1),\A(R_1))\cong (\M(R_2,\sigma_2),\A(R_2))\] whose
  existence is assured by Theorem~\ref{thm:fmii1}.
\end{remark}

\section{Algebraic preliminaries}\label{s_sap}

In this section, we collect some algebraic observations.
Fix a Feldman-Moore relation $R=(X,\mu,R,\sigma)$ with right counting
measure~$\nu$, let~$H=L^2(R,\nu)$, let~$\M=\M(R,\sigma)$ and
let~$\A=\A(R)$. Also let $\Sigma_0$ be the collection of left finite
functions on~$R$, and let~$s,L,\Sigma$ be the symbol
map, inverse symbol map and the symbol set of~$R$, respectively.

We can describe the bimodule action of~$\A$ on~$\M$ quite easily in
terms of the pointwise product of symbols.
\begin{definition}
  For~$a,b\in L^\infty(R,\nu)$, let $ a\pw b$ be the pointwise product
  of~$a$ and~$b$.
\end{definition}

\begin{definition}
  For~$\alpha\in L^\infty(X,\mu)$ we write
  \[c(\alpha)\colon R\to\bC,\quad (x,y)\mapsto
  \alpha(x)\quad\text{and}\quad r(\alpha)\colon R\to\bC,\quad
  (x,y)\mapsto \alpha(y).\]
\end{definition}
\begin{lemma}\label{lem:action}
  For~$a\in \Sigma$ and~$\beta,\gamma\in L^\infty(X,\mu)$, we have
  \[ D(\beta)L(a)D(\gamma)=L(c(\beta)\pw a\pw r(\gamma)).\]
\end{lemma}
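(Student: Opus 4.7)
The plan is to work directly with the convolution formula $L(a)\xi=a*_\sigma\xi$, exploiting two key properties already recorded in the excerpt: that $s$ is a $*$-isomorphism from $\M$ onto $\Sigma$ sending operator composition to the twisted convolution $*_\sigma$, and that the $2$-cocycle $\sigma$ is normalised so that $\sigma(x,x,y)=\sigma(x,y,y)=1$.

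First I would observe that since $d(\beta),d(\gamma)\in\Sigma_0\subseteq\Sigma$ and $a\in\Sigma$, applying $s$ to the left-hand side yields
\[ s\bigl(D(\beta)L(a)D(\gamma)\bigr)=d(\beta)*_\sigma a*_\sigma d(\gamma), \]
by the $*$-isomorphism property of $s$. It then suffices to show that this triple convolution equals $c(\beta)\pw a\pw r(\gamma)$ pointwise a.e., after which applying the inverse symbol map $L$ yields the claim.

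The two convolutions collapse because $d(\beta)$ and $d(\gamma)$ are supported on the diagonal. Explicitly, for $(x,z)\in R$,
\[ (d(\beta)*_\sigma a)(x,z)=\sum_{y\sim x}d(\beta)(x,y)\,a(y,z)\,\sigma(x,y,z)=\beta(x)\,a(x,z)\,\sigma(x,x,z)=\beta(x)\,a(x,z), \]
using that the only surviving term is $y=x$ and that $\sigma(x,x,z)=1$ by normalisation; this is precisely $(c(\beta)\pw a)(x,z)$. An identical calculation, using $\sigma(x,z,z)=1$, shows that convolving on the right by $d(\gamma)$ multiplies pointwise by $r(\gamma)$. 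Composing these two computations (equivalently, doing a single direct computation on the triple sum) gives the desired equality of symbols.

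I do not anticipate any real obstacle: the only subtle point is making sure the convolution formula $L(a)\xi=a*_\sigma\xi$ is valid for general $a\in\Sigma$ and not merely for $a\in\Sigma_0$, but this is exactly what is asserted in the definition of $L$ in the excerpt. As a sanity check, one can alternatively verify the identity by evaluating both operators on an arbitrary $\xi\in H$: the left-hand side yields $\beta(x)\sum_{y\sim x}a(x,y)\gamma(y)\xi(y,z)\sigma(x,y,z)$, while the right-hand side is the convolution of $c(\beta)\pw a\pw r(\gamma)$ with $\xi$, which gives the same expression since $c(\beta)(x,y)=\beta(x)$ and $r(\gamma)(x,y)=\gamma(y)$.
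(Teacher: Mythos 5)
Your proof is correct and essentially the same as the paper's: both reduce to checking that $s(D(\beta)L(a)D(\gamma))=c(\beta)\pw a\pw r(\gamma)$ via the twisted convolution formula, using that $d(\beta),d(\gamma)$ are diagonal and that $\sigma$ is normalised. The paper phrases the computation as a single evaluation of $D(\beta)L(a)D(\gamma)\chi_\Delta$ rather than invoking the multiplicativity of $s$ twice, but the calculation is identical.
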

\begin{proof}
The statement follows from the identity
$s\big(D(\beta)L(a)D(\gamma))=c(\beta)\pw  a\pw r(\gamma)$;
its verification is straightforward, but we include it for completeness:
\begin{align*}
  s\big(D(\beta)L(a)D(\gamma)\big)(x,y)
  & = \big(D(\beta)L(a)D(\gamma)\chi_{\Delta}\big)(x,y) \\
  & = \beta(x)\big(L(a)D(\gamma)\chi_{\Delta}\big)(x,y)\\
  & = \beta(x)\sum_{z\sim x} a(x,z) \big(D(\gamma)\chi_{\Delta}\big)(z,y)\sigma(x,z,y)\\
  & = \beta(x)\sum_{z\sim y} a(x,z) \gamma(z) \chi_{\Delta}(z,y)\sigma(x,z,y)\\
  & = \beta(x) a(x,y) \gamma(y) \sigma(x,y,y)\\
  & = \beta(x) a(x,y) \gamma(y) \\
&= \left(c(\beta)\pw a\pw r(\gamma)\right)(x,y).\qedhere
\end{align*}
\end{proof}

Recall the standard way to associate an inverse semigroup to $R$.
Suppose that~$f\colon \delta\to \rho$ is a Borel isomorphism
between two Borel subsets~$\delta,\rho\subseteq X$. Such a map will be
called a \emph{partial Borel isomorphism of~$X$}. If~$g\colon
\delta'\to \rho'$ is another partial Borel isomorphism of~$X$, then we
can (partially) compose them as follows:
\[ g\circ f\colon f^{-1}(\rho\cap \delta')\to g(\rho\cap
\delta'),\quad x\mapsto g(f(x)).\] Let us write $\Gr f=\{(x,f(x))\colon
\text{$x$ is in the domain of~$f$}\}$ for the graph of~$f$. Under
(partial) composition, the set~\[ \I(R)=\{f\colon \text{$f$ is a partial Borel
  isomorphism of~$X$ with $\Gr f\subseteq R$}\}\] is an inverse
semigroup, where the inverse of~$f\colon\delta\to\rho$ in~$\I(R)$ is
the inverse function~$f^{-1}\colon\rho\to \delta$.

If~$f\in\I(R)$, then~$\bB(\Gr f)\leq 2$, so~$\chi_{\Gr f}\in
\Sigma_0$. We define an operator~$V(f)\in \M$ by
\[ V(f)=L(\chi_{\Gr f}).\]
If~$\delta$ is a Borel subset of~$X$, we will write
$P(\delta)=V(\id_{\delta})$ where~$\id_{\delta}$ is the identity map
on the Borel set~$\delta\subseteq X$. Note that $P(\delta) =
D(\chi_{\delta})$.

\goodbreak

\begin{lemma}\label{l_pin}\leavevmode
  \begin{enumerate}
  \item If~$f\in \I(R)$, then $V(f)^*=V(f^{-1})$.
  \item If~$f\in \I(R)$ and $\delta,\rho$ are Borel subsets of~$X$,
    then \[P(\delta)V(f)P(\rho)=V(\id_\rho\circ f\circ \id_\delta).\]
  \item If~$\delta$ is a Borel subset of~$X$, then $P(\delta)$ is a
    projection in~$\A$, and every projection in~$\A$ is of this form.
  \item If~$\rho$ is a Borel subset of~$X$, then $V(f) P(\rho) =
    P(f^{-1}(\rho))V(f)$.
  \item \label{pisom} If $f : \delta\rightarrow\rho$ is in~$\I(R)$,
    then~$V(f)$ is a partial isometry %
    with initial projection $P(\rho)$ and final projection $P(\delta)$.
  \end{enumerate}
\end{lemma}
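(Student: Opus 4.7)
The plan is to translate each assertion into a statement about symbols under the inverse symbol map $L\colon \Sigma\to \M$ and exploit the $*$-algebra structure on $\Sigma$ together with Lemma~\ref{lem:action}; the normalisation of $\sigma$ will do most of the cocycle bookkeeping for us. Throughout, the key observation is simply that $V(f)=L(\chi_{\Gr f})$ and $P(\delta)=D(\chi_\delta)$, so all five statements reduce to straightforward computations with characteristic functions on $R$.

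For (1), I would apply the identity $s(L(a)^*)(x,y)=\overline{a(y,x)}$ to $a=\chi_{\Gr f}$. Since $(y,x)\in \Gr f$ iff $(x,y)\in \Gr f^{-1}$ and the function is real-valued, this yields $s(V(f)^*)=\chi_{\Gr f^{-1}}=s(V(f^{-1}))$, and injectivity of $s$ gives the claim. For (2), plug $\beta=\chi_\delta$, $\gamma=\chi_\rho$, $a=\chi_{\Gr f}$ into Lemma~\ref{lem:action}: the value of $c(\chi_\delta)\pw \chi_{\Gr f}\pw r(\chi_\rho)$ at $(x,y)$ is $\chi_\delta(x)\chi_{\Gr f}(x,y)\chi_\rho(y)$, which is nonzero precisely when $x\in\delta$, $y=f(x)$, and $y\in\rho$, i.e.\ on $\Gr(\id_\rho\circ f\circ \id_\delta)$. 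For (3), $P(\delta)=D(\chi_\delta)$ is selfadjoint and idempotent by the $*$-algebra property of $L$ combined with $\chi_\delta^2=\chi_\delta=\overline{\chi_\delta}$, and conversely the spectral theorem (or the fact that $\A=D(L^\infty(X,\mu))$ is a $*$-isomorphic image) tells us that any projection of $\A$ is $D(\chi_\delta)$ for some Borel $\delta\subseteq X$.

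Claim (4) I would prove by writing both sides as $L$ applied to a convolution and computing on $\Sigma_0$. For the left side,
\[(\chi_{\Gr f}*_\sigma \chi_{\Gr \id_\rho})(x,z)=\sum_{y\sim x}\chi_{\Gr f}(x,y)\chi_{\Gr \id_\rho}(y,z)\sigma(x,y,z),\]
and only the term with $y=f(x)=z\in\rho$ survives, giving $\chi_{f^{-1}(\rho)}(x)\chi_{\Gr f}(x,z)\sigma(x,z,z)$; normalisation of $\sigma$ kills the cocycle. The right side gives the same function by Lemma~\ref{lem:action} applied with $\beta=\chi_{f^{-1}(\rho)}$, $\gamma=1$. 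For (5), I would combine (1) and (2): $V(f)^*V(f)=V(f^{-1})V(f)=L(\chi_{\Gr f^{-1}}*_\sigma \chi_{\Gr f})$, and the convolution at $(x,z)$ reduces to the single term $y=f^{-1}(x)=f^{-1}(z)$, which forces $x=z\in\rho$ and yields $\chi_\rho(x)\sigma(x,f^{-1}(x),x)=\chi_\rho(x)$, again by normalisation. Thus $V(f)^*V(f)=D(\chi_\rho)=P(\rho)$, and the analogous computation gives $V(f)V(f)^*=P(\delta)$, so $V(f)$ is a partial isometry with the stated initial and final projections.

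The only mild subtlety is checking in (4) and (5) that the surviving $\sigma$-factors indeed involve a repeated coordinate so that the normalisation assumption applies; this is the ``main obstacle,'' but in each case the supports of the convolved characteristic functions force $y$ to coincide with one of $x$ or $z$, making the reduction automatic.
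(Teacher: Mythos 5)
Your proof is correct and follows essentially the same route as the paper: every claim is pushed through the $*$-isomorphism $L$ and Lemma~\ref{lem:action}, with your direct convolution computations for (4) and (5) standing in for the paper's marginally slicker use of the identity $\id_\rho\circ f=f\circ\id_{f^{-1}(\rho)}$ together with part (2). One tiny imprecision in your closing remark: for (5) the repeated coordinates are the first and third ($x=z$), not $y$ with one of $x,z$ --- but your displayed evaluation $\sigma(x,f^{-1}(x),x)=1$ already applies the normalisation hypothesis correctly, so nothing is affected.
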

\begin{proof}
(1) It is straightforward that $\chi_{\Gr (f^{-1})}=(\chi_{\Gr f})^*$
    (where the ${}^*$ on the right hand side is the involution
    on~$\Sigma$ discussed in~\sect\ref{s_prel} above). Since~$L$ is a $*$-isomorphism,
    $V(f^{-1})=V(f)^*$.

(2) Note that
    \[ (\delta\times X)\cap \Gr f\cap (X\times \rho) = \Gr(\id_\rho\circ f\circ \id_\delta),\]
    so
    \[ c(\chi_\delta)\pw \chi_{\Gr f}\pw
    r(\chi_\rho)=\chi_{\Gr(\id_\rho\circ f\circ \id_\delta)}.\]
    By Lemma~\ref{lem:action},
    \begin{equation*}
      P(\delta)V(f)P(\rho)
      =L(c(\chi_\delta)\pw \chi_{\Gr f}\pw r(\chi_\rho))
      = V(\id_\rho\circ f\circ \id_\delta).
    \end{equation*}

(3) Taking $f=\id_\delta$ in~(1) shows
    that~$P(\delta)=V(\chi_{\id_\delta})$ is self-adjoint; and taking
    $f=\id_\Delta$ and $\delta=\rho$ in~(2) shows that $P(\delta)$ is
    idempotent.
    So $P(\delta)$ is a projection.  Since
    $P(\delta)=D(\chi_{\delta})$, we have $P(\delta)\in
    \A$. Conversely, since~$L$ is a $*$-isomorphism, any
    projection~$P$ in~$\A$ is equal to~$D(\alpha)$ for some
    projection~$\alpha\in L^\infty(X,\mu)$. So $\alpha=\chi_\delta$ for some Borel
    set~$\delta\subseteq X$, and hence $P = P(\delta)$ for some Borel set $\delta\subseteq X$.

    (4) Since $\id_\rho\circ f=f\circ\id_{f^{-1}(\rho)}$ and~$P(X)=I$,
    this follows by taking~$\delta=X$ in~(2).

    (5) Using the fact that $\sigma$ is normalised, a simple
    calculation yields
    \[ \chi_{\Gr f}*_\sigma \chi_{\Gr f^{-1}} = \chi_{\Gr(\id_\delta)}.\]
    Applying the $*$-isomorphism~$L$ and using~(1) gives
    $V(f)V(f)^*=P(\delta)$
    and replacing~$f$ with $f^{-1}$ gives
    $V(f)^*V(f)=P(\rho)$.
\end{proof}

\begin{proposition}\label{prop:bimod-symb}
  Let~$\Phi\colon \M\to \M$ be a linear $\A$-bimodule map.
  \begin{enumerate}
  \item If $f\in \I(R)$ and~$V=V(f)$, then $s(\Phi(V))=
    \chi_{\Gr f} \pw s(\Phi(V))$.
  \item For $i=1,2$, let $f_i\colon \delta_i\to \rho_i$ be in~$\I(R)$
    and let~$V_i=V(f_i)$. If $G=\Gr(f_1)\cap \Gr(f_2)$, then
    \[ \chi_G\pw s(\Phi(V_1))= \chi_G\pw s(\Phi(V_2)).\]
  \end{enumerate}
\end{proposition}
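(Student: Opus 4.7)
The plan is to deduce both parts from Lemma~\ref{lem:action} and Lemma~\ref{l_pin}, exploiting the $\A$-bimodule property of $\Phi$ to produce symbol identities that localise the support of $s(\Phi(V))$.

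For~(1), write $g = s(\Phi(V))$ and $f\colon \delta\to\rho$. Lemma~\ref{l_pin}(5) gives $V = P(\delta)\, V\, P(\rho)$, so the bimodule property forces $\Phi(V) = P(\delta)\,\Phi(V)\, P(\rho)$, and Lemma~\ref{lem:action} shows that $g$ is supported in $\delta\times\rho$. To refine this to $\Gr f$, I would fix a Borel set $\alpha\subseteq \delta$, set $\alpha' = f(\alpha)\subseteq\rho$, and invoke Lemma~\ref{l_pin}(4) to obtain $P(\alpha)V = VP(\alpha')$. Applying $\Phi$ and passing to symbols via Lemma~\ref{lem:action} yields
\[ \chi_\alpha(x)\, g(x,y) = g(x,y)\, \chi_{\alpha'}(y) \quad\text{for a.e.\ } (x,y)\in R. \]
Choose a countable family $\{B_n\}_{n\in\bN}$ of Borel sets separating the points of the standard Borel space $X$ and set $\alpha_n = B_n\cap \delta$; the countable collection of resulting identities holds simultaneously on a conull subset of $R$. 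On this set, if $(x,y)\in\delta\times\rho$ satisfies $y\ne f(x)$, then $x$ and $f^{-1}(y)$ are distinct points of $\delta$, so some $B_n$ separates them, whence $\chi_{\alpha_n}(x)\ne \chi_{\alpha_n'}(y)$ and the identity forces $g(x,y)=0$.

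For~(2), let $\delta = \pi_1(G)$ and $\rho = \pi_2(G)$; since $\pi_1\colon \Gr(f_i)\to\delta_i$ is a Borel isomorphism, both sets are Borel. By construction, the restrictions $f_1|_\delta$ and $f_2|_\delta$ coincide with values in $\rho$; denote this common restriction by $f$. Lemma~\ref{l_pin}(2) then yields
\[ P(\delta)\, V_1\, P(\rho) = V(f) = P(\delta)\, V_2\, P(\rho), \]
so applying $\Phi$ and taking symbols via Lemma~\ref{lem:action} gives
\[ \chi_\delta(x)\, s(\Phi(V_1))(x,y)\, \chi_\rho(y) = \chi_\delta(x)\, s(\Phi(V_2))(x,y)\, \chi_\rho(y) \]
almost everywhere on $R$. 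Since $G\subseteq \delta\times\rho$, multiplication by $\chi_G$ yields the desired identity.

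The main delicate point is the measure-theoretic bookkeeping in~(1): one has to verify that a countable separating family of Borel sets genuinely pins down $g$ off $\Gr f$, and that the intersection of the corresponding conull subsets of $R$ remains conull with respect to the right counting measure $\nu$. Once that is handled, part~(2) is essentially a direct computation in the same style.
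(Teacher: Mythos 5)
Your argument is correct, but it takes a genuinely different route from the paper in both parts. For part~(1), after localising the support of $g=s(\Phi(V))$ to $\delta\times\rho$, the paper does not work at the symbol level at all: it shows that $V^*\Phi(V)$ commutes with every projection $P(\rho')$ in $\A$ (using the same relation $VP(\rho')=P(f^{-1}(\rho'))V$ that you use), invokes the fact that $\A$ is a \emph{maximal} abelian algebra to conclude $V^*\Phi(V)\in\A$, and then writes $\Phi(V)=VV^*\Phi(V)=VD(\gamma)$, from which the support condition is immediate via Lemma~\ref{lem:action}. Your version replaces the appeal to maximality of $\A$ by a countable separating family of Borel sets and an a.e.\ pointwise argument; this is more elementary (it only uses that $\A$ contains all the projections $P(\alpha)$, not that it is a masa) but requires exactly the null-set bookkeeping you flag, which does go through: countably many $\nu$-conull sets intersect in a $\nu$-conull set, and a standard Borel space admits a countable point-separating family, so the argument that $g$ vanishes a.e.\ off $\Gr f$ is sound. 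For part~(2), the paper deduces the claim from part~(1) together with the identity $f_1\circ\id_\delta=f_2\circ\id_\delta$, whereas your compression $P(\delta)V_1P(\rho)=V(f)=P(\delta)V_2P(\rho)$ followed by applying $\Phi$ and multiplying by $\chi_G$ is self-contained and does not use part~(1) at all; it is arguably the cleaner of the two. One small point worth making explicit in your part~(2) is the Borelness of $\delta=\pi_1(G)$ and $\rho=\pi_2(G)$, which you correctly justify via the injectivity of $\pi_1$ on $\Gr(f_1)$ (Lusin--Souslin); the paper sidesteps $\rho$ entirely by only projecting onto the first coordinate.
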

\begin{proof}
(1) Let~$f\in \I(R)$ and let $\rho\subseteq X$ be a Borel set.
    Since $\Phi$ is an $\A$-bimodule map, Lemma~\ref{l_pin} implies
    that
    \begin{align*}V^*\Phi(V)P(\rho) &= V^*\Phi(V P(\rho))
      = V^*\Phi(P(f^{-1}(\rho))V) \\&=
      V^*P(f^{-1}(\rho))\Phi(V)\\&=
      (P(f^{-1}(\rho))V)^*\Phi(V) \\&=
      (VP(\rho))^*\Phi(V) = P(\rho) V^*\Phi(V).
    \end{align*}
    Hence $V^*\Phi(V)$ commutes with all projections in $\A$, and
    since $\A$ is a masa, $V^*\Phi(V)\in \A$. If~$\delta$ is the
    domain of~$f$, then by Lemma~\ref{l_pin}(\ref{pisom}), $P(\delta)$ is
    the final projection of~$V$, and therefore
    \[\Phi(V)=\Phi(P(\delta)V)=P(\delta)\Phi(V)=VV^*\Phi(V)\in
    V\A.\] So $\Phi(V)=VD(\gamma)$ for some $\gamma\in
    L^\infty(X,\mu)$. By Lemma~\ref{lem:action},
    \[ s(\Phi(V))=s(VD(\gamma))=s(L(\chi_{\Gr
      f})D(\gamma))=\chi_{\Gr f}\pw d(\gamma),\] so $s(\Phi(V))= \chi_{\Gr f} \pw
    s(\Phi(V))$.

    (2) Let~$\delta=\pi_1(G)$ where~$\pi_1(x,y)=x$ for $(x,y)\in
    R$. It is easy to see that $\chi_G=c(\chi_\delta)\pw \chi_{\Gr
      f_i}$ for $i=1,2$. By part~(1), $s(\Phi(V_i))=\chi_{\Gr f_i}\pw
    s(\Phi(V_i))$. Hence by Lemmas~\ref{lem:action}
    and~\ref{l_pin},
    \begin{align*}
      \chi_G \pw s(\Phi(V_i))&=c(\chi_\delta)\pw \chi_{\Gr f_i}\pw
      s(\Phi(V_i))\\&=c(\chi_\delta)\pw
      s(\Phi(V_i)) = s(P(\delta)\Phi(V_i))\\&=
      s(\Phi(P(\delta)V_i)) = s(\Phi(V(f_i\circ \id_\delta))).
  \end{align*}
  The definition of~$\delta$ ensures that $f_1\circ
  \id_\delta=f_2\circ \id_\delta$, so $\chi_G \pw s(\Phi(V_1)) =
  \chi_G\pw s(\Phi(V_2))$.
\end{proof}

\section{Schur multipliers: definition and characterisation}\label{s_sm}

Let $(X,\mu,R,\sigma)$ be a Feldman-Moore coordinatisation of a
separably acting Cartan pair~$(\M,\A)$, and let $\Sigma_0,\Sigma$ 
be as in Section~\ref{s_prel}. 
In this section we define the class $\fS(R,\sigma)$ of Schur
multipliers of the von Neumann algebra~$\M$ with respect to the
Feldman-Moore relation~$R$. The main result in this section,
Theorem~\ref{th_main}, characterises these multipliers as normal
bimodule maps. From this it follows that~$\fS(R,\sigma)$ depends only
on the Cartan pair~$(\M,\A)$. We also show that isomorphic
Feldman-Moore relations yield isomorphic classes of Schur multipliers.

\begin{definition}
  \label{d_sh}
  Let~$R=(X,\mu,R,\sigma)$ be a Feldman-Moore coordinatisation of a
  Cartan pair~$(\M,\A)$.  We say that $\phi\in L^\infty(R,\nu)$ is a
  \emph{Schur multiplier of~$(\M,\A)$ with respect to~$R$}, or simply
  a \emph{Schur multiplier of~$\M$}, if
  \[ a\in \Sigma(R,\sigma)\implies \phi\pw a\in \Sigma(R,\sigma) \]
  where~$\pw$ is the pointwise product on~$L^\infty(R,\nu)$.
  We then write
  \[ m(\phi)\colon \Sigma(R,\sigma)\to \Sigma(R,\sigma),\quad a\mapsto
  \phi\pw a\] and
  \[ M(\phi)\colon \M\to \M,\quad T\mapsto L (\phi\pw s(T)).\]
\end{definition}

Set
\[\fS = \fS(R,\sigma) = \{\phi\in L^\infty(R,\nu)\colon \text{$\phi$
 is a Schur multiplier of $\M$}\}.\]
It is clear from Definition~\ref{d_sh} that $\fS(R,\sigma)$ is an algebra
with respect to pointwise addition and multiplication of functions.

\begin{example}\label{ex_bh}
  For a suitable choice of Feldman-Moore coordinatisation,
  $\fS(R,\sigma)$ is precisely the set of classical Schur multipliers
  of~$\B(\ell^2)$. Indeed, let $X = \bN$, equipped with the (atomic)
  probability measure $\mu$ given by $\mu(\{i\}) = p_i$, $i\in \bN$,
  and set $R = X\times X$.  If~$p_i>0$ for every~$i\in \bN$,
  then~$\mu$ is quasi-invariant under~$R$. Let $\sigma$ be the trivial
  $2$-cocycle $\sigma \equiv 1$.  The right counting measure for the
  Feldman-Moore relation~$(X,\mu,R,\sigma)$ is $\nu=\kappa\times \mu$
  where~$\kappa$ is counting measure on~$\bN$. Indeed,
  for~$E\subseteq R$, 
  \[\nu(E)=\sum_{y\in\bN} |E_y| \mu(\{y\})
    =\sum_{y\in\bN} \kappa\times \mu(E_y)= \kappa\times \mu(E).\]
  Hence $L^2(R,\nu)$ is canonically isometric to the Hilbert space
  tensor product $\ell^2\otimes\ell^2(\bN,\mu)$. Let~$T\in
  \M(R,\sigma)$. For an elementary tensor $\xi\otimes\eta\in
  L^2(R,\nu)$, we have
  \[T(\xi\otimes\eta) (i,j) = L_{s(T)}(\xi\otimes \eta)(i,j) =
  \sum_{k=1}^{\infty} s(T)(i,k)\xi(k)\eta(j) = (A_{s(T)}\xi\otimes
  \eta)(i,j)\] where $A_a\in \B(\ell^2)$ is the operator with
  matrix~$a\colon \bN\times\bN\to \bC$. It follows that the map
  $T\mapsto A_{s(T)}\otimes I$ is an isomorphism between $\M(R,\sigma)$ and
  $\B(\ell^2)\otimes I$, so
  \[\Sigma(R,\sigma)=\{a\colon\bN\times\bN\to \bC\mid \text{$a$ is the matrix of~$A$
    for some~$A\in \B(\ell^2)$}\}.\] In particular, a function $\phi :
    \bN\times\bN\rightarrow \bC$ is in~$\fS(R,\sigma)$ if and only
    if $\phi$ is a (classical) Schur multiplier of~$\B(\ell^2)$.
\end{example}

\begin{example}\label{ex_d}
  If~$(X,\mu,R,\sigma)$ is a Feldman-Moore relation and $\Delta$ is
  the diagonal of~$R$, then $\chi_{\Delta}\in \fS(R,\sigma)$ since for
  any~$a\in L^\infty(R,\nu)$, the function 
  \[\chi_\Delta\pw a=d(x\mapsto a(x,x))\]
  belongs to $\Sigma_0$ and hence to $\Sigma$. 
\end{example}

More generally:

\begin{proposition}\label{prop:sigma0}
  For any Feldman-Moore relation~$(X,\mu,R,\sigma)$, we have
  $\Sigma_0(R,\sigma)\subseteq \fS(R,\sigma)$.
\end{proposition}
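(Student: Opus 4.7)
The plan is to reduce the proposition to the inclusion $\Sigma_0\subseteq \Sigma$ recorded at~\eqref{eq:sigma0-inclusion}, via the key observation that pointwise multiplication by any element of $\Sigma_0$ sends bounded Borel functions back into $\Sigma_0$. Concretely, I would fix $\phi\in \Sigma_0$, write $E$ for its support (which is band limited by hypothesis), take an arbitrary $a\in\Sigma$, and choose a bounded Borel representative $\tilde a$ of the $L^\infty$-class of $a$; this is possible because $\Sigma\subseteq L^\infty(R,\nu)$ by~\eqref{eq:sigma0-inclusion}. The pointwise product $\phi\tilde a$ is then bounded Borel with support contained in $E$, and hence it lies in $\Sigma_0$. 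Applying $\Sigma_0\subseteq \Sigma$ once more, $\phi\tilde a\in\Sigma$; and since $\phi\tilde a$ represents the class $\phi\pw a\in L^\infty(R,\nu)$, this gives $\phi\pw a\in\Sigma$. As $a\in\Sigma$ was arbitrary, Definition~\ref{d_sh} yields $\phi\in\fS(R,\sigma)$.

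No real obstacle is anticipated. The two ingredients we need are already established: equation~\eqref{eq_newn} together with $L_0(\Sigma_0)\subseteq\M$ gives $\Sigma_0\subseteq s(\M)=\Sigma$ for free, and band-limitedness of the support of $\phi$ is trivially inherited by the support of $\phi\tilde a$. The only minor point worth noting is the routine passage between Borel functions and their $\nu$-equivalence classes, which is harmless because the product of a Borel function with a $\nu$-null function is still $\nu$-null.
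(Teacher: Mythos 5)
Your proof is correct and is essentially identical to the paper's own argument: both pass to a bounded Borel representative of $a$, observe that the pointwise product with $\phi$ is bounded with band-limited support (hence in $\Sigma_0$), and conclude via the inclusion $\Sigma_0\subseteq\Sigma$ from~\eqref{eq:sigma0-inclusion}. No changes needed.
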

\begin{proof}
  Let~$\phi\in \Sigma_0(R,\sigma)$ and let~$a\in \Sigma(R,\sigma)$. 
  Recall that~$a\in L^\infty(R,\nu)$, so we can
  choose a bounded Borel function~$\alpha\colon R\to\bC$ with
  $\alpha=a$ almost everywhere with respect to~$\nu$.  The function
  $\phi\pw \alpha$ is then bounded, and its support is a subset of the
  support of~$\phi$, which is band limited. Hence $\phi\pw \alpha\in
  \Sigma_0$, and $\phi\pw \alpha=\phi\pw a$ almost everywhere. By
  equation~(\ref{eq:sigma0-inclusion}), we have $\phi\pw a\in
  \Sigma(R,\sigma)$, so $\phi\in \fS(R,\sigma)$.
\end{proof}

We now embark on the proof of our main result.

\begin{lemma}\label{lem:cgt}
  Let~$\fX$ be a Banach space, let~$V$ be a complex normed vector
  space, and let~$\alpha,\beta$ and $h$ be linear maps so that the
  following diagram commutes:
  \begin{diagram}
    \fX & \rTo^{h} & V \\
    \dTo^{\alpha{}} &                        &  \dTo_{\beta{}}\\
    \fX & \rTo^h & V
  \end{diagram}
If~$h$ and $\beta$ are continuous and~$h$ is injective, then~$\alpha$
is continuous.
\end{lemma}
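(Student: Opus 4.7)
The plan is to apply the closed graph theorem. Since both the domain and codomain of $\alpha$ are the Banach space $\fX$, it suffices to show that the graph of $\alpha$ is closed in $\fX\times\fX$.

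First I would take a sequence $(x_n)$ in $\fX$ with $x_n\to x$ and $\alpha(x_n)\to y$ for some $x,y\in \fX$, and aim to prove $\alpha(x)=y$. Applying the continuous map $h$ to both convergent sequences gives $h(x_n)\to h(x)$ and $h(\alpha(x_n))\to h(y)$ in $V$. The commutativity of the diagram rewrites the second of these as $\beta(h(x_n))\to h(y)$, while continuity of $\beta$ forces $\beta(h(x_n))\to \beta(h(x))$. Comparing limits yields $h(y)=\beta(h(x))=h(\alpha(x))$, and since $h$ is injective we conclude $y=\alpha(x)$, as required.

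With the graph of $\alpha$ shown to be closed, the closed graph theorem gives continuity of $\alpha$. There is no real obstacle here; the only delicate point is that the closed graph theorem needs both source and target of $\alpha$ to be Banach spaces, which is exactly the hypothesis on $\fX$ (note that $V$ is only assumed to be a normed space, but this is irrelevant since $V$ does not appear as a target of $\alpha$). In particular, no completeness or norm structure on $V$ beyond continuity of the maps into and out of it is used.
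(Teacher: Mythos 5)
Your proof is correct and follows essentially the same route as the paper: apply the closed graph theorem, push both convergent sequences through the continuous injection $h$, use the commuting square and continuity of $\beta$ to identify the limits, and invoke injectivity of $h$. The paper merely normalises to $x_n\to 0$ by linearity, which is a cosmetic difference.
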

\begin{proof}
  If $x_n\in \fX$ with $x_n\to 0$ and $\alpha(x_n)\to y$ as~$n\to \infty$ for some $y\in
  \fX$, then
  \begin{align*} h(y)=h(\lim_{n\to \infty} \alpha(x_n))&=\lim_{n\to
      \infty}h(\alpha(x_n))
    \\
    &=\lim_{n\to \infty}\beta(h(x_n))=\beta(h(\lim_{n\to
      \infty}x_n))=\beta(h(0))=0.
  \end{align*}
  Since~$h$ is injective, $y=0$ and $\alpha$ is continuous by the
  closed graph theorem.
\end{proof}

If~$\phi$ is a Schur multiplier of~$\M$, then we have the following
commutative diagram of linear maps:
\begin{diagram}
  \M & \pile{\lTo^L\\ \rTo_s} & \Sigma(R,\sigma) \\
  \dTo^{M(\phi)} &                        &  \dTo_{m(\phi)}\\
  \M & \pile{\lTo^L\\ \rTo_s} & \Sigma(R,\sigma)
\end{diagram}
We now record some continuity properties of this diagram.
\begin{proposition}\label{prop:continuity}\leavevmode
  Let~$(X,\mu,R,\sigma)$ be a Feldman-Moore relation,
  let~$(\M,\A)=(\M(R,\sigma),\A(R))$, let~$\H=L^2(R,\nu)$ where~$\nu$
  is the right counting measure of~$R$, and write
  $\Sigma=\Sigma(R,\sigma)$.  Let~$\phi\in \fS(R,\sigma)$.
  \begin{enumerate}
  \item $m(\phi)$ is continuous
    as a map on $(\Sigma,\|\cdot\|_\infty)$.
  \item \label{s-contraction} $s$ is a contraction from
    $(\M,\|\cdot\|_{\B(\H)})$ to $(\Sigma,\|\cdot\|_\infty)$.
  \item \label{Mphi-cts} $M(\phi)$ is norm-continuous.
  \item $m(\phi)$ is continuous as a map on $(\Sigma,\|\cdot\|_2)$.
  \item $s$ is a contraction from $(\M,\|\cdot\|_{\B(\H)})$
    to $(\Sigma, \|\cdot\|_2)$.
  \item\label{lem-obvious} $s$ is continuous from~$(\M,{\rm SOT})$ to
    $(\Sigma,\|\cdot\|_2)$, where SOT is the strong operator topology
    on~$\M$.
  \end{enumerate}
\end{proposition}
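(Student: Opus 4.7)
The approach is straightforward: parts~(1), (2), (4), (5) and~(6) follow from direct estimates and feed into part~(3), which is the substantive step and will require Lemma~\ref{lem:cgt}. I would work through the parts in the order they are stated, since part~(3) builds on parts~(1) and~(2).

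For~(1), since $\Sigma\subseteq L^\infty(R,\nu)$ by~\eqref{eq:sigma0-inclusion}, the bound $\|\phi\pw a\|_\infty\leq\|\phi\|_\infty\|a\|_\infty$ is immediate, so $m(\phi)$ has operator norm at most $\|\phi\|_\infty$ on $(\Sigma,\|\cdot\|_\infty)$. For~(2), the inequality $\|s(T)\|_\infty\leq\|T\|$ is precisely the content of~\cite[Proposition~2.6]{fm2}, which was noted in the paragraph after~\eqref{eq_newn}. For~(4), the standard fact that multiplication by an $L^\infty$ function acts boundedly on $L^2(R,\nu)$ gives $\|\phi\pw a\|_2\leq\|\phi\|_\infty\|a\|_2$. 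For~(5), I would compute $\|s(T)\|_2=\|T\chi_\Delta\|_2\leq\|T\|\,\|\chi_\Delta\|_2$ and note that $\|\chi_\Delta\|_2^2=\nu(\Delta)=\mu(X)=1$, as observed just before the definition of the symbol map. Part~(6) is then immediate from the definition of the strong operator topology applied to the single vector $\chi_\Delta\in H$: if $T_n\to T$ SOT, then $s(T_n)=T_n\chi_\Delta\to T\chi_\Delta=s(T)$ in $\|\cdot\|_2$.

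The main step is~(3). I would apply Lemma~\ref{lem:cgt} with $\fX=\M$ (a Banach space), $V=(\Sigma,\|\cdot\|_\infty)$, $h=s$, $\alpha=M(\phi)$, and $\beta=m(\phi)$. The commutativity $s\circ M(\phi)=m(\phi)\circ s$ displayed just before the proposition holds by construction: since $L=s^{-1}$, one has $s(M(\phi)(T))=s(L(\phi\pw s(T)))=\phi\pw s(T)=m(\phi)(s(T))$. The map $s$ is injective (being a bijection onto~$\Sigma$) and continuous by part~(2), and $m(\phi)$ is continuous by part~(1). Lemma~\ref{lem:cgt} then produces the norm-continuity of $M(\phi)$, establishing~(3). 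The main obstacle is simply the recognition that norm-continuity of $M(\phi)$ cannot be verified by direct inspection of its action on $\M$ and must instead be routed through the closed-graph argument packaged in Lemma~\ref{lem:cgt}.
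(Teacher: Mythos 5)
Your proposal is correct and follows essentially the same route as the paper: parts (1), (2), (4), (5), (6) by the direct estimates you give, and part (3) via Lemma~\ref{lem:cgt} applied to the commutative square $s\circ M(\phi)=m(\phi)\circ s$ with $h=s$ injective and contractive and $\beta=m(\phi)$ bounded. The only (harmless) difference is that you spell out the role of part (1) in verifying the hypotheses of Lemma~\ref{lem:cgt}, which the paper leaves implicit.
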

\begin{proof}\leavevmode
(1) and~(4) follow from the fact that~$\phi$ is essentially bounded.

(2) See~\cite[Proposition~2.6]{fm2}.

(3) This follows from~(\ref{s-contraction}) and Lemma~\ref{lem:cgt}.

(5) follows from the fact that $\chi_\Delta$ is a unit vector in~$\H$.

(6) Let $\{T_\lambda\}$ be a net in~$\M$ which converges
    in the SOT to~$T\in\M$. Then
    $s(T_\lambda)=T_\lambda(\chi_\Delta)\to T(\chi_\Delta)=s(T)$ in
    $\|\cdot\|_2$.\qedhere
\end{proof}

If~$R$ is a Feldman-Moore relation with right counting measure~$\nu$,
let~$\nu^{-1}$ be the measure on~$R$ given by
\[\nu^{-1}(E)=\nu(\{(y,x)\colon (x,y)\in E\}).\] We will need the
following facts, which are established in~\cite{fm2}.
\begin{proposition}\leavevmode\label{prop:nuinverse}
  \begin{enumerate}
  \item $\nu$ and $\nu^{-1}$ are mutually absolutely continuous;
  \item if~$d=\frac{d\nu^{-1}}{d\nu}$, then the
    set~$d^{1/2}\Sigma_0=\{d^{1/2}a\colon a\in \Sigma_0\}$ of
    \emph{right finite} functions on~$R$ has the property that
    for~$b\in d^{1/2}\Sigma_0$, the formula
    \[ R_0(b)\xi=\xi *_\sigma b,\quad \xi\in H \] defines a bounded
    linear operator~$R_0(b)\in\B(H)$; and
  \item for~$b\in d^{1/2}\Sigma_0$, we have $R_0(b)\in\M'$ and
    $R_0(b)(\chi_\Delta)=b$.
  \end{enumerate}
\end{proposition}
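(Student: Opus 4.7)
The three claims can be handled separately, using quasi-invariance of~$\mu$ for~(1), band-limitedness of the support of~$a\in\Sigma_0$ for~(2), and the cocycle identity with normalisation of~$\sigma$ for~(3). For~(1), the key observation is that for a Borel set $E\subseteq R$ the horizontal slice~$E_y$ is countable with $|E_y|=0$ if and only if $y\notin\pi_2(E)$. Hence
\[\nu(E)=\int_X|E_y|\,d\mu(y)=0\iff \mu(\pi_2(E))=0,\]
and symmetrically $\nu^{-1}(E)=0\iff\mu(\pi_1(E))=0$. Since $E\subseteq R$, the saturations $[\pi_1(E)]_R$ and $[\pi_2(E)]_R$ coincide, so quasi-invariance of~$\mu$ under~$R$ forces $\mu(\pi_1(E))=0\iff\mu(\pi_2(E))=0$, and mutual absolute continuity of~$\nu$ and~$\nu^{-1}$ follows.

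For~(2), fix $a\in\Sigma_0$ with support~$E$ satisfying $\bB(E)\leq K$ and expand
\[R_0(b)\xi(x,z)=\sum_{y\sim x}\xi(x,y)\,b(y,z)\,\sigma(x,y,z).\]
The summand vanishes unless $(y,z)\in E$, so for fixed~$z$ there are at most~$K$ nonzero terms. Cauchy--Schwarz together with $|\sigma|=1$ then yields
\[|R_0(b)\xi(x,z)|^2\leq K\sum_{y\sim x}|\xi(x,y)|^2\,d(y,z)\,|a(y,z)|^2.\]
Integrating over~$R$ with respect to~$\nu$, a Fubini--Tonelli argument together with the identity $d\nu^{-1}=d\cdot d\nu$ allows one to trade integration over~$z$ for integration over~$y$; a second appeal to $|E^z|\leq K$ then delivers $\|R_0(b)\xi\|_2\leq K\|a\|_\infty\|\xi\|_2$.

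For~(3), the commutation $R_0(b)L_0(c)=L_0(c)R_0(b)$ for each $c\in\Sigma_0$ is equivalent to the associativity identity $(c*_\sigma\xi)*_\sigma b=c*_\sigma(\xi*_\sigma b)$, which on expansion reduces to the cocycle identity $\sigma(x,y,z)\sigma(x,z,w)=\sigma(x,y,w)\sigma(y,z,w)$; since $\M_0$ generates~$\M$ as a von Neumann algebra, this yields $R_0(b)\in\M'$. The action on~$\chi_\Delta$ then follows directly from normalisation:
\[R_0(b)\chi_\Delta(x,z)=\sum_{y\sim x}\chi_\Delta(x,y)\,b(y,z)\,\sigma(x,y,z)=b(x,z)\,\sigma(x,x,z)=b(x,z).\]
The main obstacle is the bookkeeping in~(2): one must confirm that the $d^{1/2}$ factor in the definition of right-finite functions is exactly what is needed to absorb the asymmetry between $\nu$ (left counting) and $\nu^{-1}$ (right counting) when Tonelli swaps the order of integration, so that the right-convolution is bounded and $\|\xi\|_2^2$ re-emerges on the right-hand side.
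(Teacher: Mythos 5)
The paper does not prove this proposition at all---it simply cites Feldman--Moore \cite{fm2}---and your argument correctly reconstructs the standard proof from that source. All three parts are sound: the saturation argument for (1) (using that $[\pi_1(E)]_R=[\pi_2(E)]_R$ for $E\subseteq R$ together with quasi-invariance); the Cauchy--Schwarz estimate followed by the change of measure $d\nu^{-1}=d\,d\nu$ for (2), which converts integration over the second coordinate of $(y,z)$ into integration over the first, absorbs the factor $d(y,z)$, and yields $\|R_0(b)\|\leq \bB(E)\,\|a\|_\infty$; and the reduction of commutation with $L_0(\Sigma_0)$ to the $2$-cocycle identity for (3), with $R_0(b)\in\M'$ following because $\M_0(R,\sigma)$ generates $\M$. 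Two minor points worth recording: the projections $\pi_j(E)$ of a Borel set $E\subseteq R$ are measurable because $R$ has countable sections (Lusin--Novikov), which is needed to make sense of $\mu(\pi_j(E))$ in part (1); and in your closing sentence the labels ``left'' and ``right'' counting are swapped relative to the paper's convention, though this has no bearing on the argument.
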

We will now see that the SOT-convergence of a \emph{bounded} net
in~$\M$ is equivalent to the $\|\cdot\|_2$ convergence of its image
under~$s$.

\begin{proposition}\label{p_sconv}
  Let $\{T_\lambda\}\subseteq \M(R)$ be a norm bounded net.

  \begin{enumerate}
  \item $\{T_\lambda\}$ converges in the {\rm SOT} if and only if
    $\{s(T_\lambda)\}$ converges with respect to~$\|\cdot\|_2$.
  \item  For~$T\in\M$, we have
    \[ T_\lambda\to_{{\rm SOT}} T\iff s(T_\lambda)\to_{\|\cdot\|_2} s(T).\]
  \end{enumerate}
\end{proposition}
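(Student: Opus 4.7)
My plan is to prove (1) first and then deduce (2) from it via injectivity of the symbol map~$s$. The forward direction of (1) is almost immediate from Proposition~\ref{prop:continuity}(\ref{lem-obvious}), which gives $\|\cdot\|_2$-convergence of $s(T_\lambda)$ once $T_\lambda\to T$ in SOT; the limit~$T$ automatically lies in~$\M$ by SOT-closedness of the von Neumann algebra~$\M$.

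For the converse in (1), the key tool is Proposition~\ref{prop:nuinverse}: for each $b\in d^{1/2}\Sigma_0$, the operator $R_0(b)\in\M'$ satisfies $R_0(b)\chi_\Delta=b$ and commutes with every $T_\lambda$, giving
\[
T_\lambda b \;=\; T_\lambda R_0(b)\chi_\Delta \;=\; R_0(b)T_\lambda\chi_\Delta \;=\; R_0(b)\,s(T_\lambda).
\]
Since $R_0(b)$ is bounded, $\|\cdot\|_2$-convergence of $s(T_\lambda)$ yields $\|\cdot\|_2$-convergence of $T_\lambda b$ for every $b\in d^{1/2}\Sigma_0$. Combining the uniform bound $\sup_\lambda\|T_\lambda\|<\infty$ with density of $d^{1/2}\Sigma_0$ in~$H$ (see below), a standard $\varepsilon/3$ argument promotes this to convergence of $T_\lambda\xi$ for every $\xi\in H$. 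Setting $T\xi:=\lim_\lambda T_\lambda\xi$ then produces an operator $T\in\B(H)$ with $\|T\|\le\sup_\lambda\|T_\lambda\|$, so $T_\lambda\to T$ in SOT and $T\in\M$ by SOT-closedness.

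Part (2) will then follow easily. Its forward direction is an instance of the forward direction of (1). For the converse, $s(T_\lambda)\to s(T)$ in $\|\cdot\|_2$ yields $T_\lambda\to T'$ in SOT for some $T'\in\M$ by (1); then $s(T')=\lim_\lambda s(T_\lambda)=s(T)$, and injectivity of~$s$ forces $T'=T$, as required.

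The main technical point is the density of $d^{1/2}\Sigma_0$ in $H=L^2(R,\nu)$. By Proposition~\ref{prop:nuinverse}, $\nu$ and $\nu^{-1}$ are mutually absolutely continuous with Radon-Nikodym derivative $d=d\nu^{-1}/d\nu$, so multiplication by $d^{1/2}$ is a unitary $L^2(R,\nu^{-1})\to L^2(R,\nu)$. Density of $d^{1/2}\Sigma_0$ in $L^2(R,\nu)$ thus reduces to density of $\Sigma_0$ in $L^2(R,\nu^{-1})$, which follows by exhausting an arbitrary $L^2(R,\nu^{-1})$-function by truncations supported on band-limited subsets of finite $\nu^{-1}$-measure. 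This measure-theoretic step is where I expect most of the care to be required; the remainder of the argument is bookkeeping.
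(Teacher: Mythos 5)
Your proposal is correct and follows essentially the same route as the paper: the forward directions come from Proposition~\ref{prop:continuity}(6), the converse in (1) uses the right convolution operators $R_0(b)\in\M'$ from Proposition~\ref{prop:nuinverse} together with density of the right finite functions and the uniform bound, and (2) is deduced from (1) via injectivity of~$s$. The only difference is that you sketch the density of $d^{1/2}\Sigma_0$ in $H$ directly (correctly), whereas the paper simply cites \cite[Proposition~2.3]{fm2} for it.
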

\begin{proof} (1) The ``only if'' is addressed by
  Proposition~\ref{prop:continuity}(\ref{lem-obvious}).

  Conversely, suppose that $s(T_\lambda)=T_\lambda(\chi_{\Delta})$
  converges with respect to~$\|\cdot\|_2$ on~$H$.  For a right
  finite function $b\in d^{1/2}\Sigma_0$, we have
  \[R_0(b)T_\lambda (\chi_{\Delta}) = T_\lambda R_0(b)(\chi_\Delta) =
  T_\lambda (b)\] which converges
  in~$H$. By~\cite[Proposition~2.3]{fm2}, the set of right finite
  functions is dense in $H$. Since~$\{T_\lambda\}$ is bounded, we
  conclude that $T_\lambda(\xi)$ converges for every $\xi\in H$. So we
  may define a linear operator~$T\colon H\to H$
  by~$T(\xi)=\lim_\lambda T_\lambda(\xi)$; then~$\|T(\xi)\|\leq
  \sup_\lambda \|T_\lambda\| \|\xi\|$, so~$T\in\B(H)$. By
  construction, $T_\lambda\to T$ strongly.

  (2) The direction ``$\implies$'' follows from
  Proposition~\ref{prop:continuity}(\ref{lem-obvious}). For the converse, apply~(1) to see that
  if~$s(T_\lambda)\to_{\|\cdot\|_2} s(T)$, then $T_\lambda \to_{{\rm SOT}} S$
  for some~$S\in \M$. Hence $s(T_\lambda)\to _{\|\cdot\|_2} s(S)$; therefore
  $s(S)=s(T)$ and so $S=T$.
\end{proof}

The following argument is taken from the proof
of~\cite[Corollary~2.4]{ps}.

\begin{lemma}\label{lem-popsmith}
  Let~$H$ be a separable Hilbert space and $\M\subseteq\B(H)$ be a von
  Neumann algebra.  Suppose that~$\Phi \colon\M\to\M$ is a bounded
  linear map which is strongly sequentially continuous on bounded
  sets, meaning that for every~$r>0$, whenever $X,X_1,X_2,X_3,\dots$
  are operators in~$\M$ with norm at most~$r$ with $X_n\to _{\rm
    SOT}X$ as $n\to \infty$, we have $\Phi(X_n)\to_{{\rm
      SOT}}\Phi(X)$. Then $\Phi$ is normal.
\end{lemma}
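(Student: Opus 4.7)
The plan is to exploit separability of the predual $\M_*$ (which follows from separability of $H$) to reduce normality of $\Phi$ to sequential weak$^*$-continuity on norm-bounded sets, and then to derive this from the hypothesised SOT-sequential continuity by a convex-combination argument.

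First I would record the standard reduction: since $\M_*$ is separable, the closed unit ball of $\M$ is weak$^*$-metrizable; combined with the Krein-Smulian (Banach-Dieudonn\'e) theorem this shows that a bounded linear map on $\M$ is normal if and only if it is sequentially weak$^*$-continuous on norm-bounded sets. On such sets the weak$^*$ topology agrees with the weak operator topology, so it suffices to prove that for any norm-bounded sequence $(T_n)$ in $\M$ with $T_n\to T$ in WOT, one has $\Phi(T_n)\to \Phi(T)$ in WOT.

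I would then argue by cluster points. Since $(\Phi(T_n))$ is norm-bounded, it lies in a weak$^*$-compact metrizable set, so it suffices to prove that its only weak$^*$-cluster point is $\Phi(T)$. Suppose $\Phi(T_{n_k})\to S$ in weak$^*$. Because $T_{n_k}\to T$ in WOT, and WOT- and SOT-closed convex subsets of $\B(H)$ coincide (Hahn-Banach: both topologies have the same continuous linear functionals on $\B(H)$), and because SOT is metrizable on bounded sets of $\B(H)$ when $H$ is separable, I can extract convex combinations
\[\tilde S_k=\sum_{j\ge k}\lambda_{k,j}T_{n_j}\]
with $\tilde S_k\to T$ in SOT and $\sup_k\|\tilde S_k\|<\infty$. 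The hypothesis then gives $\Phi(\tilde S_k)\to \Phi(T)$ in SOT, hence in weak$^*$. On the other hand, for any $\omega\in\M_*$ the scalars $\omega(\Phi(\tilde S_k))=\sum_j\lambda_{k,j}\omega(\Phi(T_{n_j}))$ are convex combinations of terms that tend to $\omega(S)$, so $\omega(\Phi(\tilde S_k))\to\omega(S)$; that is, $\Phi(\tilde S_k)\to S$ in weak$^*$. Uniqueness of weak$^*$-limits forces $S=\Phi(T)$, as desired.

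The main technical obstacle is precisely this bridge from the SOT-sequential hypothesis to weak$^*$-sequential continuity: SOT-convergence on bounded sets is strictly stronger than weak$^*$-convergence, so the hypothesis at first glance controls fewer input sequences than one wants. The convex-averaging step resolves this by replacing the given WOT-convergent sequence with an SOT-convergent sequence of averages (legitimate via Hahn-Banach and the metrizability of bounded SOT), and then linearity of $\Phi$ together with the weak$^*$-continuity of evaluation against elements of $\M_*$ transfers the resulting limit back to $S$.
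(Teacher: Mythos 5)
Your argument is correct, and it reaches the conclusion by a genuinely different route from the paper's. The paper fixes a vector functional $\omega_{\xi,\eta}$ and works with the kernel $K$ of $\omega_{\xi,\eta}\circ\Phi$: the sequential SOT hypothesis together with metrizability of bounded sets shows each $K_r=K\cap rB_{\M}$ is SOT-closed, convexity and boundedness upgrade this to ultraweak closedness, and a single application of Krein-Smulian gives $\omega_{\xi,\eta}\circ\Phi\in\M_*$; density of the vector functionals finishes the proof. You instead reformulate normality as sequential weak*-continuity on bounded sets (itself a Krein-Smulian/metrizability argument) and then bridge the gap between WOT input sequences and the SOT hypothesis by Mazur averaging: since $T$ lies in the SOT-closed convex hull of each tail $\{T_{n_j}\colon j\ge k\}$, you produce convex combinations $\tilde S_k\to T$ in SOT, feed those to the hypothesis, and transfer the limit back to the cluster point $S$ by linearity and the fact that convex combinations of terms drawn from the tail of a sequence converging to $\omega(S)$ still converge to $\omega(S)$. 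Both proofs rest on the same three pillars --- separability of $\M_*$ giving metrizability of bounded sets, Krein-Smulian, and the coincidence of SOT- and weak*-closures of bounded convex sets --- but the paper applies the convexity fact once to the convex set $K_r$, which eliminates all the subsequence, cluster-point and diagonal-selection bookkeeping that your version carries; in exchange, your version makes completely explicit where and how the SOT-sequential hypothesis is consumed, and it isolates the reusable principle that a bounded map which is SOT-sequentially continuous on bounded sets is automatically WOT-sequentially continuous there.
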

\begin{proof}
  For~$\xi,\eta\in H$, let~$\omega_{\xi,\eta}$ be the vector
  functional in~$\M_*$ given by $\omega_{\xi,\eta}(X)=\langle
  X\xi,\eta\rangle$, $X\in \M$, and let
  \[K=\ker\Phi^*(\omega_{\xi,\eta})\qand K_r=K\cap \{X\in \M\colon
  \|X\|\leq r\},\ \ \ \text{for $r>0$}.\] Let~$r>0$. Since~$H$ is
  separable, $\M_*$ is separable and so the strong operator topology
  is metrizable on the bounded set~$K_r$. From the sequential strong
  continuity of~$\Phi$ on $\{ X\in\M\colon \|X\|\leq r\}$, it follows
  that~$K_r$ is strongly closed. Since~$K_r$ is bounded and convex,
  each $K_r$ is ultraweakly closed. By the Krein-Smulian theorem, $K$
  is ultraweakly closed, so $\Phi^*(\omega_{\xi,\eta})$ is ultraweakly
  continuous; that is, it lies in $\M_*$. The linear span of
  $\{\omega_{\xi,\eta}\colon \xi,\eta\in H\}$ is (norm) dense in~$\M_*$, so
  this shows that $\Phi^*(\M_*)\subseteq\M_*$.
  Define $\Psi\colon\M_*\to\M_*$ by
  $\Psi(\omega)=\Phi^*(\omega)$. Then $\Phi=\Psi^*$, so $\Phi$ is
  normal.
\end{proof}

\begin{remark}\label{rk:graph-partition}
  Let~$R$ be a Feldman-Moore relation. It follows from the first part
  of the proof of \cite[Theorem~1]{fm1} that there is a countable
  family~$\{f_j\colon \delta_j\to\rho_j\colon j\ge0\}\subseteq \I(R)$
  such that $\{\Gr f_j\colon j\ge0\}$ is a partition of~$R$. Indeed,
  it is shown there that there are Borel sets~$\{D_j\colon j\ge1\}$
  which partition $R\setminus\Delta$ so that $D_j=\Gr f_j$, where
  $f_j\colon \pi_{1}(D_j)\to \pi_2(D_j)$ is a one-to-one map. Since
  $\Gr f_j$ and $\Gr(f_j^{-1})$ are both Borel sets, each~$f_j$ is
  in~$\I(R)$, and we can take $f_0$ to be the identity mapping on~$X$.
\end{remark}

\begin{theorem}\label{th_main}
  We have that $\{M(\phi) : \phi\in \fS\}$ coincides with the set of
  normal $\A$-bimodule maps on $\M$.
\end{theorem}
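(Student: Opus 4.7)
The proof splits into two inclusions. The forward inclusion --- that $M(\phi)$ is a normal $\A$-bimodule map whenever $\phi\in\fS$ --- is largely formal. The bimodule property follows from Lemma~\ref{lem:action}, since the sandwich-operation $a\mapsto c(\beta)\pw a\pw r(\gamma)$ commutes with pointwise multiplication by $\phi$, and norm-continuity is Proposition~\ref{prop:continuity}(\ref{Mphi-cts}). To deduce normality via Lemma~\ref{lem-popsmith}, I would verify strong sequential continuity on norm-bounded sets: if $\|T_n\|\le r$ and $T_n\to T$ in SOT, then Proposition~\ref{p_sconv} yields $s(T_n)\to s(T)$ in $L^2$; multiplication by $\phi\in L^\infty(R,\nu)$ preserves this; and applying Proposition~\ref{p_sconv} in the other direction (legitimate because $M(\phi)$ is bounded so $\{M(\phi)(T_n)\}$ is norm-bounded) returns $M(\phi)(T_n)\to M(\phi)(T)$ strongly.

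For the reverse inclusion, fix a normal $\A$-bimodule map $\Phi$. Using Remark~\ref{rk:graph-partition}, fix a countable family $\{f_j\}\subseteq \I(R)$ whose graphs partition $R$. Each $V(f_j)$ is a partial isometry by Lemma~\ref{l_pin}(\ref{pisom}), so Proposition~\ref{prop:continuity}(\ref{s-contraction}) bounds $\|s(\Phi(V(f_j)))\|_\infty$ by $\|\Phi\|$, and Proposition~\ref{prop:bimod-symb}(1) shows this symbol is essentially supported on $\Gr f_j$. Stitching these symbols together along the partition therefore defines $\phi\in L^\infty(R,\nu)$ with $\|\phi\|_\infty\le\|\Phi\|$. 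Proposition~\ref{prop:bimod-symb}(2) then guarantees that the definition is independent of the chosen partition; equivalently, $\chi_{\Gr f}\pw\phi=s(\Phi(V(f)))$ for every $f\in\I(R)$.

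The crux is to establish $\phi\pw s(T)=s(\Phi(T))$ for all $T\in\M$, which simultaneously yields $\phi\in\fS$ and $\Phi=M(\phi)$. I would proceed in two steps. First, for $a\in\Sigma_0$ supported on a single $\Gr f$, write $L_0(a)=D(\alpha)V(f)$ using Lemma~\ref{lem:action}; the identity then follows from the $\A$-bimodule property of $\Phi$ together with the formula $\chi_{\Gr f}\pw\phi=s(\Phi(V(f)))$. For general $a\in\Sigma_0$, decompose $a=\sum_j a\chi_{\Gr f_j}$; the partial-sum operators lie in $\M_0$, are norm-bounded by $\bB(\mathrm{supp}\,a)\|a\|_\infty$, and converge to $L_0(a)$ in SOT by Proposition~\ref{p_sconv}, so normality of $\Phi$ pushes the identity to the limit. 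Second, extend to arbitrary $T\in\M$ by Kaplansky density, approximating $T$ by a norm-bounded net $T_\lambda\in\M_0$ with $T_\lambda\to T$ in SOT.

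I expect the main obstacle to be the careful tracking of topologies in these limiting arguments: the normality of $\Phi$ is most cleanly expressed in the ultraweak topology, whereas the symbol-map estimates naturally live in SOT and $L^2$. The reconciliation exploits the fact that ultraweak and WOT agree on norm-bounded sets; this transfers normality-driven convergence of $\Phi(T_\lambda)$ into weak $L^2$-convergence of the symbols $s(\Phi(T_\lambda))$, which must then match the $L^2$-norm limit $\phi\pw s(T_\lambda)\to\phi\pw s(T)$ to deliver the identity. Keeping every approximating net norm-bounded (via Kaplansky density and via the explicit $\bB(E)\|a\|_\infty$ bound on $\M_0$) is the key bookkeeping that makes this transfer legal.
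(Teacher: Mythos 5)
Your proposal is correct and follows essentially the same route as the paper: the same forward argument via Lemma~\ref{lem-popsmith}, the same construction of $\phi$ by stitching the symbols $s(\Phi(V(f_k)))$ along a graph partition with Proposition~\ref{prop:bimod-symb} ensuring consistency, and the same Kaplansky-density limiting step. The only (harmless) deviations are that you reach the dense subalgebra $\M_0$ by an extra countable-decomposition limit where the paper simply invokes that $\M_0$ is the left $\A$-module generated by the $V(f)$'s, and you identify the final limits via weak $L^2$-convergence of symbols where the paper uses Proposition~\ref{p_sconv} to upgrade to SOT-convergence.
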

\begin{proof}
  Let~$\phi\in \fS$. %
  If~$a\in \Sigma$ and~$\beta,\gamma\in L^\infty(X,\mu)$, then by
  Lemma~\ref{lem:action},
  \begin{align*}
    M(\phi)\big(D(\beta)L(a)D(\gamma)\big)&=M(\phi)\big(L(c(\beta) \pw
    a\pw r(\gamma))\big)\\&=L(c(\beta)\pw \phi\pw a\pw
    r(\gamma)) = D(\beta)M(\phi)(L(a))D(\gamma)
    \end{align*}
    and~$M(\phi)$ is plainly linear, so $M(\phi)$ is an~$\A$-bimodule
    map.

  Let $r>0$ and let $T_n,T\in \M$ for $n\in\bN$ with
  $\|T_n\|,\|T\|\leq r$ and $T_n\to_{{\rm SOT}}T$.
  By Proposition~\ref{prop:continuity}(\ref{lem-obvious}), $s(T_n)\to_{\|\cdot\|_2} s(T)$,
  so by the $\|\cdot\|_2$ continuity of~$m(\phi)$,
  \[ m(\phi)(s(T_n))\to_{\|\cdot\|_2} m(\phi)(s(T));\]
  thus,
  \[ s(M(\phi)(T_n))\to_{\|\cdot\|_2} s(M(\phi)(T)).\]
  By Proposition~\ref{p_sconv},
  \[ M(\phi)(T_n)\to_{{\rm SOT}} M(\phi)(T).\] Since $L^2(R,\nu)$ is
  separable, Proposition~\ref{prop:continuity}(\ref{Mphi-cts}) and
  Lemma~\ref{lem-popsmith} show that $M(\phi)$ is normal.
  \medskip\goodbreak

  Now suppose that~$\Phi$ is a normal~$\A$-bimodule map on~$\M$. By
  Remark~\ref{rk:graph-partition}, we may write $R$ as a disjoint
  union $R = \bigcup_{k=1}^{\infty} F_k$, where~$F_k=\Gr f_k$
  and~$f_k\in \I(R)$, $k\in \bN$.  Let
  \[\phi : R\rightarrow \bC,\quad \phi(x,y) = \sum_{k\ge1}
  s(\Phi(V(f_k)))(x,y).\]
  Note that $\phi$ is well-defined since the sets
  $F_k$ are pairwise disjoint and, by Lemma~\ref{prop:bimod-symb}(1),
  $s(\Phi(V(f_k))) = s(\Phi(V(f_k)))\pw \chi_{F_k}$.
  It now easily follows that $\phi$ is measurable. Moreover,
  since each~$V(f_k)$ is a partial
  isometry (see Lemma~\ref{l_pin}(\ref{pisom})), 
  by \cite[Proposition~2.6]{fm2} we have
  \[ \|\phi\|_\infty=\sup_{k\ge1} \|s(\Phi(V(f_k)))\|_\infty
  \leq\sup_{k\ge1} \|\Phi(V(f_k))\|\leq \|\Phi\|;\]
  thus, $\phi$ is essentially bounded.

  We claim that $s(\Phi(T))=\phi\pw s(T)$ for every~$T\in\M$. First we
  consider the case $T=V(g)$ where $g\in \I(R)$. If we write $g_1=g$,
  then for~$m\ge2$ we can find $g_m\in \I(R)$ with graph $G_m=\Gr g_m$
  so that $R$ is the disjoint union $R=\bigcup_{m\ge1} G_m$. For
  example, we can define~$g_m$ to be the partial Borel isomorphism
  whose graph is~$F_{m-1}\setminus G_1$. Now let $\psi(x,y) =
  \sum_{m\ge1} s(\Phi(V(g_m)))(x,y)$, $(x,y)\in R$.  By
  Proposition~\ref{prop:bimod-symb}(2), we have $\phi\pw \chi_{F_k\cap
    G_m}=\psi\pw \chi_{F_k\cap G_m}$ for every~$k,m\ge1$, so
  $\phi=\psi$.  In particular, \[s(\Phi(V(g_1)))=\psi\pw
  \chi_{G_1}=\phi\pw\chi_{G_1}=\phi\pw s(V(g_1)).\] %
  Hence if~$T$ is in the left $\A$-module $\V$ generated
  by~$\{V(f)\colon f\in \I(R)\}$, then $s(\Phi(T))=\phi\pw s(T)$.
  On the other hand, by~\cite[Proposition~2.3]{fm2}, 
  $\V=\M_0(R,\sigma)$ and hence
  $\V$ is a strongly dense $*$-subalgebra of~$\M$.

  Now let $T\in \M$. By Kaplansky's Density Theorem, there exists a
  bounded net $\{T_\lambda\}\subseteq \V$ such that
  $T_\lambda\rightarrow T$ strongly. For every $\lambda$, we have that
  \[ s(\Phi(T_\lambda))=\phi\pw s(T_\lambda).\] By
  Proposition~\ref{prop:continuity}(\ref{lem-obvious}),
  $s(T_\lambda)\rightarrow_{\|\cdot\|_2} s(T)$ and, since $\phi\in
  L^{\infty}(R)$, we have \[\phi\pw
  s(T_\lambda)\rightarrow_{\|\cdot\|_2} \phi\pw s(T).\] On the other
  hand, since $\Phi$ is normal, $\Phi(T_\lambda)\to \Phi(T)$
  ultraweakly. Normal maps are bounded, so 
  $\{\Phi(T_\lambda)\}$ is a bounded net in~$\M$.  By
  Proposition~\ref{p_sconv}, $\Phi(T_\lambda)$ is strongly convergent.
  Thus, $\Phi(T_\lambda)\to \Phi(T)$ strongly. Since $\Phi(T)\in \M$,
  Proposition~\ref{p_sconv} yields
  \[s(\Phi(T_\lambda))\to_ {\|\cdot\|_2} s(\Phi(T)).\] By uniqueness
  of limits, $\phi\pw s(T)=s(\Phi(T))$. In particular, $\phi\pw
  s(T)\in \Sigma$ so $\phi$ is a Schur multiplier, and $\Phi(T)=
  L(\phi\pw s(T))=M(\phi)(T)$. It follows that $\Phi=
  M(\phi)$.
\end{proof}

\begin{remark}\label{r_autcb}
  The authors are grateful to Adam Fuller and David Pitts for bringing
  the following to our attention.  If~$(\M,\A)$ is a Cartan pair,
  then~$\A$ is norming for~$\M$ in the sense of~\cite{pss},
  by~\cite[Corollary 1.4.9]{cpz}. Hence by~\cite[Theorem 2.10]{pss},
  if $\phi$ is a Schur multiplier, then the map $M(\phi)$ is competely
  bounded with~$\|M(\phi)\|_{\cb}=\|M(\phi)\|$.
\end{remark}

We now show that up to isomorphism, the set of Schur multipliers of a
Cartan pair with respect to a Feldman-Moore coordinatisation~$R$
depends on~$(\M,\A)$, but not on~$R$.

\begin{proposition}\label{p_shpre}
  Let $(X_i,\mu_i,R_i,\sigma_i)$, $i = 1,2$, be isomorphic
  Feldman-Moore relations and let $\rho : X_1\to X_2$ be an
  isomorphism from $R_1$ onto $R_2$.  Then $\tilde{\rho}\colon
  a\mapsto a\circ \rho^{-2}$ is a bijection
  from~$\Sigma(R_1,\sigma_1)$ onto $\Sigma(R_2,\sigma_2)$, and an
  isometric isomorphism from $\fS(R_1,\sigma)$ onto
  $\fS(R_2,\sigma_2)$.
\end{proposition}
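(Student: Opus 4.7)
The plan is to lift the isomorphism $\rho$ between $R_1$ and $R_2$ to a $*$-isomorphism of von Neumann algebras and show that it intertwines the symbol maps via $\tilde\rho$. Let $U$ be the unitary from Remark~\ref{rk:unitary} and set $\Theta(T)=U^*TU$, a $*$-isomorphism $\M(R_1,\sigma_1)\to \M(R_2,\sigma_2)$. The proof hinges on the identity
$$s_2 \circ \Theta = \tilde\rho \circ s_1 \quad\text{on } \M(R_1,\sigma_1),$$
from which the proposition will follow by diagram chasing.

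First I would verify that $\tilde\rho$ is a well-defined isometric $*$-isomorphism $L^\infty(R_1,\nu_1)\to L^\infty(R_2,\nu_2)$. Unwinding the definition of the right counting measure and applying the change-of-variables for $\rho_*\mu_1$ shows that $\rho^2_*\nu_1$ and $\nu_2$ are mutually absolutely continuous on $R_2$, with Radon-Nikodym derivative equal to the function $h$ from Remark~\ref{rk:unitary} applied to the second coordinate. Next, for $a\in \Sigma_0(R_1,\sigma_1)$, equations~(\ref{eq:unitary-action}) and~(\ref{eq_newn}) immediately give
$$s_2(\Theta(L_1(a)))=s_2(L_2(\tilde\rho(a)))=\tilde\rho(a)=\tilde\rho(s_1(L_1(a))),$$
establishing the intertwining on the dense $*$-subalgebra $\M_0(R_1,\sigma_1)$.

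To extend the identity to all of $\M(R_1,\sigma_1)$, I would approximate $T\in\M(R_1,\sigma_1)$ by a bounded sequence $(T_n)\subseteq \M_0(R_1,\sigma_1)$ with $T_n\to_{\rm SOT}T$ via Kaplansky's density theorem. Conjugation by $U$ preserves the strong operator topology, so $\Theta(T_n)\to_{\rm SOT}\Theta(T)$. By Proposition~\ref{p_sconv}, both $s_1(T_n)\to s_1(T)$ and $s_2(\Theta(T_n))\to s_2(\Theta(T))$ in $\|\cdot\|_2$. Passing to subsequences (the $L^2$ spaces are separable), we obtain $\nu_1$-a.e.\ convergence of $s_1(T_n)$ and $\nu_2$-a.e.\ convergence of $s_2(\Theta(T_n))$; the mutual absolute continuity established above turns the first into $\nu_2$-a.e.\ convergence of $\tilde\rho(s_1(T_n))$ to $\tilde\rho(s_1(T))$, and uniqueness of a.e.\ limits yields the required identity.

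Finally, the intertwining forces $\tilde\rho$ to be a bijection $\Sigma(R_1,\sigma_1)\to \Sigma(R_2,\sigma_2)$, since $s_1$, $s_2$ and $\Theta$ are all bijections. For the Schur multipliers, given $\phi\in \fS(R_1,\sigma_1)$ and $b\in \Sigma(R_2,\sigma_2)$, set $a=\tilde\rho^{-1}(b)\in \Sigma(R_1,\sigma_1)$; then $\phi\pw a\in \Sigma(R_1,\sigma_1)$ and $\tilde\rho(\phi)\pw b=\tilde\rho(\phi\pw a)\in \Sigma(R_2,\sigma_2)$, so $\tilde\rho(\phi)\in \fS(R_2,\sigma_2)$. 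The symmetric argument for $\tilde\rho^{-1}$ gives bijectivity, and the isometric $*$-algebra isomorphism property on $\fS$ is inherited from the ambient $L^\infty$ setting. The main obstacle is the extension step from $\M_0$ to $\M$: because $U$ hides a Radon-Nikodym correction by $h^{1/2}$, the map $\tilde\rho$ is \emph{not} an $L^2$-isometry, so one cannot pass to the $\|\cdot\|_2$-limit directly — the subsequence-and-a.e.\ argument is the natural workaround.
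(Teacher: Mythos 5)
Your proposal is correct and follows essentially the same route as the paper: both use the unitary $U$ of Remark~\ref{rk:unitary} together with equation~(\ref{eq:unitary-action}) to get the intertwining on $\M_0$, then extend via Kaplansky density, Proposition~\ref{p_sconv}, and an a.e.-convergent subsequence to identify the $L^2$-limit of the symbols. The only cosmetic difference is that you transfer a.e.\ convergence using mutual absolute continuity of $\rho^2_*\nu_1$ and $\nu_2$, whereas the paper extracts the a.e.-convergent subsequence from $Ua_\lambda=(a_\lambda\circ\rho^2)\pw\eta$ and divides out the density factor $\eta$; these are the same observation.
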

\begin{proof}
  It suffices to show that $\tilde
  \rho^{-1}(\Sigma(R_2,\sigma_2))\subseteq
  \Sigma(R_1,\sigma_1)$. Indeed, by symmetry we would then have
  $\tilde \rho(\Sigma(R_1,\sigma_1))\subseteq \Sigma(R_2,\sigma_2)$
  and could conclude that these sets are equal. Since $\tilde \rho$ is
  an isomorphism for the pointwise product, it then follows easily
  that $\tilde \rho(\fS(R_1,\sigma_1))=\fS(R_2,\sigma_2)$.

  For $i=1,2$, let~$s_i\colon \M(R_i,\sigma_i)\to
  \Sigma(R_i,\sigma_i)$ and $L_i=s_i^{-1}$ be the symbol map and the
  inverse symbol map for $R_i$, let $\nu_i$ be the right counting
  measure of~$R_i$ and let~$H_i=L^2(R_i,\nu_i)$.

  Let~$a\in \Sigma(R_2,\sigma_2)$ and let~$T=L_2(a)$. Since $T\in
  \M(R_2,\sigma_2)$, the Kaplansky density theorem gives a bounded net
  $\{T_\lambda\}\subseteq \M_0(R_2,\sigma_2)$ with $T_\lambda\to
  _{\mathrm{SOT}}T$. Let $a_\lambda=s_2(T_\lambda)$ and $a=s_2(T)$. By
  Proposition~\ref{prop:continuity}(6),
  \[a_\lambda\to a\quad\text{in~$H_2$}\] so if $U\colon H_2\to H_1$ is
  the unitary operator defined as in Remark~\ref{rk:unitary}, then
  \[(a_\lambda\circ \rho^2)\pw\eta = Ua_\lambda \to U a = (a\circ
  \rho^2)\pw\eta \quad\text{in~$H_1$}\] 
  where $\eta(x,y)=h(\rho(y))^{-1/2}$ and
  $h=\frac{d(\rho_*\mu_1)}{d\mu_2}$.  We can find a subnet, which can
  in fact be chosen to be a sequence $\{(a_n\circ \rho^2) \pw
  \eta\}$, that converges almost everywhere. Hence
  \[a_n\circ\rho^2\to a\circ\rho^2\quad\text{almost everywhere}.\]

  On the other hand, since $T_n$ converges to $T$ in the strong
  operator topology, $UT_nU^*$ converges to $UTU^*$
  strongly. Moreover, since $T_n\in \M_0(R_2,\sigma_2)$, Equation~(\ref{eq:unitary-action}) gives
  $s_1(UT_nU^*)=a_n\circ \rho^2$.
  Therefore
  \[a_n\circ\rho^2 =s_1(UT_n U^*)\to
  s_1(UTU^*)\quad\text{in~$H_1$}.\] %
  So $\tilde \rho^{-1}(a)=a\circ\rho^2 = s_1(UTU^*)\in
  \Sigma(R_1,\sigma_1)$.%
\end{proof}

\section{A class of Schur multipliers}\label{s_AR}

In this section, we examine a natural subclass of Schur multipliers on
$\M(R)$ which coincides, by a classical result of A. Grothendieck,
with the space of all Schur multipliers in the special case
$\M(R)=\B(\ell^2)$.  Throughout, we fix a Feldman-Moore relation
$(X,\mu,R,\sigma)$, and we write $\M(R) = \M(R,\sigma)$.  We first
recall some measure theoretic concepts \cite{a}.  A measurable subset
$E\subseteq X\times X$ is said to be \emph{marginally null} if there
exists a $\mu$-null set $M\subseteq X$ such that $E\subseteq (M\times
X)\cup (X\times M)$.  Measurable sets $E,F\subseteq X\times X$ are
called \emph{marginally equivalent} if their symmetric difference is
marginally null.  The set $E$ is called \emph{$\omega$-open} if $E$ is
marginally equivalent to a subset of the form $\cup_{k=1}^{\infty}
\alpha_k\times\beta_k$, where $\alpha_k,\beta_k\subseteq X$ are
measurable.

In the sequel, we will use some notions from operator space theory;
we refer the reader to~\cite{blm} and~\cite{pa} for background material.
Recall that every
element $u$ of the extended Haagerup tensor product 
$\A\otimes_{\eh}\A$ can be identified with a series \[u =
\sum_{i=1}^{\infty} A_i\otimes B_i,\] where $A_i,B_i\in \A$ and, for some constant $C > 0$,
we have
\[\left\|\sum_{i=1}^{\infty} A_i A_i^*\right\| \leq 
C \qand \left\|\sum_{i=1}^{\infty} B_i^* B_i\right\| \leq C\]
(the series being convergent in the weak* topology).  Let
$\A=\A(R)$. The element $u$ gives rise to a completely bounded
$\A'$-bimodule map $\Psi_u$ on $\B(L^2(R,\nu))$ defined by
\[\Psi_u(T) = \sum_{i=1}^{\infty} A_i T B_i, \quad T\in \B(L^2(R,\nu)).\]
For each~$T$, this series is $w^*$-convergent. Moreover, this element
$u\in \A\otimes_{\eh}\A$ also gives rise to a function $f_u\colon
X\times X\to \bC$, given by \[f_u(x,y) = \sum_{i=1}^{\infty}
a_i(x)b_i(y),\] where $a_i$ (resp. $b_i$) is the function in
$L^{\infty}(X,\mu)$ such that $D(a_i) = A_i$ (resp. $D(b_i) = B_i$),
$i\in \bN$. We write $u\sim \sum_{i=1}^{\infty} a_i\otimes b_i$.
Since
\begin{equation}\label{eq_C}
\left\|\sum_{i=1}^{\infty} |a_i|^2\right\|_{\infty} \leq 
C \qand \left\|\sum_{i=1}^{\infty} |b_i|^2\right\|_{\infty} \leq C,
\end{equation}
the function $f_u$ is well-defined up to a marginally null set.
Moreover, $f_u$ is \emph{$\omega$-continuous} in the sense that
$f_u^{-1}(U)$ is an $\omega$-open subset of $X\times X$ for every open
set $U\subseteq \bC$, and~$f_u$ determines uniquely the corresponding
element~$u\in \A\otimes_{\eh}\A$ (see~\cite{kp}).

\begin{definition}
  Given~$u\in \A\otimes_{\eh}\A$, we write
  \[ \phi_u\colon R\to \bC\] for the restriction of~$f_u$ to~$R$.
\end{definition}

In what follows, we identify $u\in \A\otimes_{\eh}\A$ with the
corresponding function~$f_u$, and write $\|\cdot\|_{\eh}$ for the norm
of $\A\otimes_{\eh}\A$.

\begin{lemma}\label{l_wd}
  If $E\subseteq X\times X$ is a marginally null set, then $E\cap R$
  is $\nu$-null.  Thus, given $u\in \A\otimes_{\eh}\A$, the function
  $\phi_u$  is well-defined as an element of
  $L^{\infty}(R,\nu)$.  Moreover, $\|\phi_u\|_{\infty}\leq
  \|u\|_{\eh}$.
\end{lemma}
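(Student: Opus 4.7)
The plan is to dispatch the three assertions in order. For assertion~(1), suppose $E \subseteq (M \times X) \cup (X \times M)$ with $\mu(M) = 0$ and decompose $E \cap R = F \cup G$ where $F \subseteq (X \times M) \cap R$ and $G \subseteq (M \times X) \cap R$. For $F$, the horizontal slice $F_y$ is empty whenever $y \notin M$, so
\[\nu(F) = \int_X |F_y|\, d\mu(y) = \int_M |F_y|\, d\mu(y) = 0\]
since $\mu(M) = 0$. For $G$, the symmetry of $R$ ensures that the flipped set $G' = \{(y,x) : (x,y) \in G\}$ is a subset of $(X \times M) \cap R$, so the previous case yields $\nu^{-1}(G) = \nu(G') = 0$, and Proposition~\ref{prop:nuinverse}(1) then forces $\nu(G) = 0$.

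Assertion~(2) is then immediate: any two $\omega$-continuous representatives of $f_u$ differ on a marginally null subset of $X \times X$, so by~(1) their restrictions to $R$ agree $\nu$-almost everywhere, which makes $\phi_u$ well-defined as an element of $L^\infty(R, \nu)$. For assertion~(3), I would fix $\epsilon > 0$ and use the definition of the extended Haagerup norm (balancing the two sides by a rescaling $a_i \mapsto \lambda a_i$, $b_i \mapsto \lambda^{-1} b_i$) to pick a representation $u \sim \sum_i a_i \otimes b_i$ with both $\|\sum_i |a_i|^2\|_\infty$ and $\|\sum_i |b_i|^2\|_\infty$ at most $\|u\|_{\eh} + \epsilon$. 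The Cauchy--Schwarz inequality yields, outside the marginally null set $(N_1 \times X) \cup (X \times N_2)$ on which either sum is infinite,
\[|f_u(x,y)|^2 \leq \Bigl(\sum_i |a_i(x)|^2\Bigr)\Bigl(\sum_i |b_i(y)|^2\Bigr) \leq (\|u\|_{\eh} + \epsilon)^2.\]
By assertion~(1), this bound persists $\nu$-almost everywhere on $R$, so $\|\phi_u\|_\infty \leq \|u\|_{\eh} + \epsilon$, and letting $\epsilon \downarrow 0$ completes the proof.

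The main obstacle is assertion~(1): on the strip $M \times X$, the integral defining $\nu$ does not vanish in any obvious way, since its horizontal slices need not be concentrated on $M$. The trick is to swap this strip for $X \times M$ using the symmetry of the equivalence relation, then invoke the mutual absolute continuity of $\nu$ and $\nu^{-1}$ from Proposition~\ref{prop:nuinverse}(1). Once this is done, assertions~(2) and~(3) are essentially formal consequences.
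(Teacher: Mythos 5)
Your proposal is correct and follows essentially the same route as the paper: it handles the strip $X\times M$ directly via the horizontal slices in the definition of $\nu$, transfers the strip $M\times X$ to the first case via the flip and the mutual absolute continuity of $\nu$ and $\nu^{-1}$ from Proposition~\ref{prop:nuinverse}, and obtains the norm bound from the Cauchy--Schwarz estimate together with (near-)optimality of the representation of $u$. The only cosmetic difference is your explicit $\epsilon$-rescaling argument where the paper simply invokes that $\|u\|_{\eh}$ is the least admissible constant $C$ in~(\ref{eq_C}).
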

\begin{proof}
  If $E \subseteq X\times M$, where $M\subseteq X$ is $\mu$-null, then
  $(E\cap R)_y = \emptyset$ if $y\not\in M$, and hence $\nu(E\cap R) =
  0$. Recall from Proposition~\ref{prop:nuinverse} that $\nu$ has the
  same null sets as the measure $\nu^{-1}$; so if $E\subseteq M\times
  X$, then $\nu(E\cap R)=0$. Hence any marginally null set is
  $\nu$-null.

  Since $\|u\|_{\eh}$ is the least possible constant $C$ so
  that~(\ref{eq_C}) holds, the set $\{(x,y)\in X\times X\colon
  |u(x,y)|>\|u\|_{\eh}\}$ is marginally null with respect to~$\mu$, so
  its intersection with~$R$ is $\nu$-null. Hence
  $\|\phi_u\|_{\infty}\leq \|u\|_{\eh}$.
\end{proof}

\begin{definition}
  Let
\[\fA(R) = \{\phi_u : u\in \A\otimes_{\eh}\A\}.\]
By virtue of Lemma~\ref{l_wd}, $\fA(R)\subseteq
L^{\infty}(R,\nu)$.
\end{definition}

\begin{lemma}\label{l_elt}
  If $a,b\in L^{\infty}(X,\mu)$ and $u = a\otimes b$, then for $T\in
  \M(R,\sigma)$ we have \[M(\phi_u)(T) = D(a)TD(b).\] In particular,
  $\phi_u\in \fS(R,\sigma)$.
\end{lemma}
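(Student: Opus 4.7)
The plan is to reduce everything to a direct application of Lemma~\ref{lem:action}, with the key observation being that on $R$ the function $\phi_u$ factorises as a pointwise product of column and row functions. The main steps are as follows.

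First I would unpack what $\phi_u$ looks like. By construction, $f_u(x,y) = a(x)b(y)$, so restricting to $R$ gives
\[ \phi_u(x,y) = a(x)b(y) = c(a)(x,y) \pw r(b)(x,y) \qquad \text{for $(x,y) \in R$}, \]
i.e.\ $\phi_u = c(a) \pw r(b)$ as elements of $L^\infty(R,\nu)$.

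Next, for an arbitrary $T \in \M(R,\sigma)$, write $T = L(s(T))$ with $s(T) \in \Sigma(R,\sigma)$. Applying Lemma~\ref{lem:action} with the symbol $s(T)$ yields
\[ D(a) T D(b) = D(a) L(s(T)) D(b) = L\bigl( c(a) \pw s(T) \pw r(b) \bigr) = L\bigl(\phi_u \pw s(T)\bigr), \]
where the last equality uses commutativity of the pointwise product together with the factorisation from the first step.

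Since the left-hand side, $D(a) T D(b)$, lies in $\M(R,\sigma)$, its image under $s$ belongs to $\Sigma(R,\sigma)$; thus $\phi_u \pw s(T) \in \Sigma(R,\sigma)$ for every $T \in \M(R,\sigma)$. By Definition~\ref{d_sh} this establishes $\phi_u \in \fS(R,\sigma)$, and simultaneously gives the formula $M(\phi_u)(T) = L(\phi_u \pw s(T)) = D(a) T D(b)$. There is essentially no obstacle here: once the factorisation $\phi_u = c(a) \pw r(b)$ is recognised, Lemma~\ref{lem:action} does all the work, and the measurability/boundedness issues are already absorbed into Lemma~\ref{l_wd}.
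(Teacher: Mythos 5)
Your argument is correct and is essentially identical to the paper's proof, which likewise applies Lemma~\ref{lem:action} to get $s(D(a)TD(b))(x,y)=a(x)s(T)(x,y)b(y)$ and then reads off both the formula for $M(\phi_u)$ and the membership $\phi_u\in\fS(R,\sigma)$. The only difference is that you spell out the factorisation $\phi_u=c(a)\pw r(b)$ and the final deduction explicitly, where the paper calls them immediate.
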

\begin{proof}
  By Lemma~\ref{lem:action},
  \[s(D(a)TD(b))(x,y) = a(x)s(T)(x,y)b(y), \ \ \ (x,y)\in R.\] The
  claim is now immediate.
\end{proof}

\begin{lemma}\label{lem:pw-weak}
  Let $(Z,\theta)$ be a $\sigma$-finite measure space and let
  $\{f_k\}_{k\in \bN}$ be a sequence in $L^2(Z,\theta)$ such that

  (i) \ \ $f_k$ converges weakly to $f\in L^2(Z,\theta)$,

  (ii) \ $f_k$ converges (pointwise) almost everywhere to $g\in L^2(Z,\theta)$, and

  (iii) $\sup_{k\ge1}\|f_k\|_{\infty}<\infty$.

\noindent Then $f=g$.
\end{lemma}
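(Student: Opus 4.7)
The plan is to test $f$ and $g$ against characteristic functions of sets of finite measure, exploiting weak $L^2$ convergence on the one hand and the dominated convergence theorem on the other.

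First I would fix a measurable set $E\subseteq Z$ with $\theta(E)<\infty$. Since $\theta(E)<\infty$, the function $\chi_E$ lies in $L^2(Z,\theta)$. By hypothesis~(i), weak convergence in $L^2$ against the test function $\chi_E$ gives
\[ \int_E f_k\,d\theta = \langle f_k,\chi_E\rangle \longrightarrow \langle f,\chi_E\rangle = \int_E f\,d\theta.\]
On the other hand, write $M=\sup_k\|f_k\|_\infty$, which is finite by~(iii). Then $|f_k\chi_E|\le M\chi_E$, and $M\chi_E$ is integrable because $\theta(E)<\infty$. Hypothesis~(ii) provides pointwise a.e.\ convergence $f_k\chi_E\to g\chi_E$, so the dominated convergence theorem yields
\[ \int_E f_k\,d\theta \longrightarrow \int_E g\,d\theta.\]
By uniqueness of limits, $\int_E f\,d\theta=\int_E g\,d\theta$ for every set $E$ of finite measure.

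Finally, I would invoke $\sigma$-finiteness to upgrade this to $f=g$ a.e. Choose an increasing sequence $Z_n$ with $\theta(Z_n)<\infty$ and $\bigcup_n Z_n=Z$. Applying the equality above to $E=Z_n\cap\{f-g>\varepsilon\}$ and $E=Z_n\cap\{f-g<-\varepsilon\}$ (for real and imaginary parts separately, if $f,g$ are complex-valued) forces each such set to have measure zero, whence $f=g$ almost everywhere. The only subtlety is that the pointwise a.e.\ limit $g$ need not be globally bounded; but because we restrict to sets of finite measure before integrating, the uniform $L^\infty$ bound on the $f_k$'s suffices to dominate, and I do not expect any genuine obstacle beyond this routine $\sigma$-finite exhaustion argument.
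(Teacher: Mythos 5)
Your argument is correct and follows essentially the same route as the paper's proof: localise to a set of finite measure, use weak $L^2$ convergence for one limit and the dominated convergence theorem (with the uniform $L^\infty$ bound as dominating function) for the other, then exhaust $Z$ by $\sigma$-finiteness. The only cosmetic difference is that you test against characteristic functions $\chi_E$ and finish with a level-set argument, whereas the paper tests against arbitrary $\xi\in L^2$ cut off to a finite-measure set and concludes $f\chi_Y=g\chi_Y$ directly; both steps are routine and equivalent.
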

\begin{proof} Let $\xi \in L^2(Z,\theta)$.  As $f_k$ converges weakly,
  $\{\|f_k\|_2\}$ is bounded.  Let $Y\subseteq Z$ be measurable with
  $\theta(Y)<\infty$. If we write $B=\sup_{k\ge1} \|f_k\|_\infty$, then
  \[|f_k\overline{\xi}\chi_Y|\leq B|\xi|\chi_Y.\] Since $B|\xi|\chi_Y$
  is integrable,
\begin{eqnarray*}
\langle f\chi_Y,\xi\rangle & = & \langle f,\chi_Y\xi\rangle =
\lim_{k\to \infty} \langle f_k,\chi_Y\xi\rangle=\lim_{k\to \infty}\int
f_k \overline{\xi}\chi_Y d\mu\\
& = & \int g\overline{\xi}\chi_Y\,d\mu=
\langle g\chi_Y,\xi\rangle
\end{eqnarray*}
by the Lebesgue Dominated Convergence
Theorem.  So $f\chi_Y = g\chi_Y$. Since $Z$ is $\sigma$-finite, this
yields $f = g$.
\end{proof}

\begin{theorem}\label{p_aeha}
  If  $u\in \A\otimes_{\eh}\A$, then
  $M(\phi_u)$ is the restriction of\/~$\Psi_u$ to $\M(R,\sigma)$
  and $\|M(\phi_u)\|\leq \|u\|_{\eh}$.
  Hence %
  \[ \fA(R)\subseteq \fS(R,\sigma).\]
\end{theorem}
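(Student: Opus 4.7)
The plan is to represent $u \sim \sum_{i=1}^\infty a_i \otimes b_i$, treat the finite partial sums $u_n = \sum_{i=1}^n a_i \otimes b_i$ by direct computation, and then transfer the conclusion to $u$ via a weak* limiting argument.

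First I would establish the finite case. Applying Lemma~\ref{l_elt} to each summand and using linearity, for every $T\in\M(R,\sigma)$ the operator $\Psi_{u_n}(T)=\sum_{i=1}^n D(a_i)TD(b_i)$ lies in $\M$ and coincides with $M(\phi_{u_n})(T)$, where $\phi_{u_n}(x,y)=\sum_{i=1}^n a_i(x)b_i(y)$. In particular, $s(\Psi_{u_n}(T))=\phi_{u_n}\pw s(T)$.

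Next I would pass to the limit. By definition of $\Psi_u$, the partial sums $\Psi_{u_n}(T)$ converge to $\Psi_u(T)$ in the weak* topology on $\B(H)$, and since $\M$ is weak*-closed, $\Psi_u(T)\in\M$. Testing weak operator convergence against vector functionals $\omega_{\chi_\Delta,\eta}$ gives $s(\Psi_{u_n}(T))\to s(\Psi_u(T))$ weakly in $H=L^2(R,\nu)$. On the other hand, by construction $\phi_{u_n}(x,y)\to\phi_u(x,y)$ pointwise $\nu$-a.e.; the Cauchy--Schwarz inequality combined with~(\ref{eq_C}) gives the uniform bound $|\phi_{u_n}(x,y)|\le\|u\|_{\eh}$, and since $s(T)\in L^\infty(R,\nu)$ by~(\ref{eq:sigma0-inclusion}), the sequence $\phi_{u_n}\pw s(T)$ is bounded in $L^\infty$ and converges pointwise a.e.\ to $\phi_u\pw s(T)$. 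As $(R,\nu)$ is $\sigma$-finite, Lemma~\ref{lem:pw-weak} identifies the weak $L^2$-limit: $s(\Psi_u(T))=\phi_u\pw s(T)$. This simultaneously shows $\phi_u\pw s(T)\in\Sigma$ (so $\phi_u\in\fS$, giving $\fA(R)\subseteq\fS(R,\sigma)$) and $\Psi_u(T)=L(\phi_u\pw s(T))=M(\phi_u)(T)$.

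The norm bound then follows immediately: $\|M(\phi_u)\|=\|\Psi_u|_{\M}\|\le\|\Psi_u\|\le\|\Psi_u\|_{\cb}\le\|u\|_{\eh}$, the last inequality being a standard feature of the extended Haagerup tensor product. The one non-routine step is the interchange of the weak $L^2$-limit and pointwise limit, which is exactly what Lemma~\ref{lem:pw-weak} was designed to handle; every other ingredient is already in place.
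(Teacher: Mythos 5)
Your proposal is correct and follows essentially the same route as the paper: reduce to the finite partial sums via Lemma~\ref{l_elt}, use weak* convergence of $\Psi_{u_k}(T)$ to get weak $L^2$-convergence of the symbols $\phi_{u_k}\pw s(T)$, and then invoke Lemma~\ref{lem:pw-weak} together with the pointwise a.e.\ convergence and the uniform $L^\infty$ bound from~(\ref{eq_C}) to identify the limit as $\phi_u\pw s(T)$. The norm estimate is also handled exactly as in the paper.
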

\begin{proof}
  Let~$H=L^2(R,\nu)$, let $u\in \A\otimes_{\eh}\A$ and
  let~$\Psi=\Psi_u$; thus, $\Psi$ is a completely bounded map on 
  $\B(H)$. It is well-known that
  $\|\Psi\|_{\cb}=\|u\|_{\eh}$. We have $u\sim \sum_{i=1}^\infty
  a_i\otimes b_i$, for some $a_i,b_i\in \A$ with
  \[C=\max\left\{\Big\|\sum_{i=1}^\infty |a_i|^2 \Big\|_\infty,\
  \Big\|\sum_{i=1}^\infty |b_i|^2\Big\|_{\infty}\right\}<\infty.\] For $k\in
  \bN$, set $u_k = \sum_{i=1}^k a_i\otimes b_i$ and $\Psi_k =
  \Psi_{u_k}$.  By Lemma~\ref{l_elt}, $\Psi_k$ leaves $\M(R,\sigma)$
  invariant. Since $\Psi_k(T)\to _{w^*}\Psi(T)$ for each~$T\in
  \B(H)$, it follows that $\Psi$ also leaves~$\M(R,\sigma)$ invariant.

  Let $\Phi$ and $\Phi_k$ be the restrictions of $\Psi$ and $\Psi_k$,
  respectively, to $\M(R,\sigma)$.  Set $\phi_k = \phi_{u_k}$ for each
  $k\in \bN$.  Let $c\in \Sigma(R,\sigma)$ and let $T=L(c)$. By
  Lemma~\ref{l_elt}, $\phi_k\in \fS(R,\sigma)$,
  so $\phi_k\pw c\in \Sigma(R,\sigma)$ and
  \[ L(\phi_k \pw c) = \Phi_k(T) \to_{w^*}\Phi(T)
  \quad\text{as $k\to \infty$}.\] Hence for every $\eta\in
  H$, we have
\[\langle \phi_k \pw c,\eta\rangle = \langle L(\phi_k \pw c)(\chi_{\Delta}),\eta\rangle \rightarrow
\langle \Phi(T)(\chi_{\Delta}),\eta\rangle = \langle
s(\Phi(T)),\eta\rangle.\] So \[\phi_k \pw c \rightarrow
s(\Phi(T))\quad\text{ weakly in $L^2(R,\nu)$.}\] However, $u_k\to u$
marginally almost everywhere, so by Lemma~\ref{l_wd}, $\phi_k\to
\phi_u$ almost everywhere, and thus \[\phi_k \pw c\rightarrow \phi_u \pw
c\quad\text{almost everywhere.}\] Since \[\sup_{k\ge1}\|\phi_k \pw
c\|_\infty \leq C\|c\|_\infty<\infty,\] Lemma~\ref{lem:pw-weak} shows
that $\phi_u \pw c = s(\Phi(T))$.  Hence \[L(\phi_u\pw s(T)) = \Phi(T)
\in \M(R,\sigma)\] for every $T\in \M(R,\sigma)$, so $\phi_u$ is a Schur
multiplier and $M(\phi_u)=\Phi=\Psi|_{\M(R,\sigma)}$.  Since
$\|M(\phi_u)\|\leq\|M(\phi_u)\|_{\cb}$ (and in fact we have equality
by Remark~\ref{r_autcb}), this shows that $\|M(\phi_u)\| \leq
\|\Psi\|_{\cb} = \|u\|_{\eh}$.
\end{proof}

\section{Schur multipliers of the hyperfinite II$_1$-factor}\label{s_h21}

Recall the following properties of the classical Schur multipliers
of~$B(\ell^2)$.
\begin{enumerate}
\item Every symbol function is a Schur multiplier.
\item Every Schur multiplier is in $\fA(R)$.
\end{enumerate}
In this section, we consider a specific Feldman-Moore coordinatisation
of the hyperfinite II$_1$ factor, and show that in this context the
first property is satisfied but the second is not.

The coordinatisation we will work with is defined as follows. Let
$(X,\mu)$ be the probability space $X = [0,1)$ with Lebesgue measure
$\mu$, and equip~$X$ with the commutative group operation of addition
modulo $1$.  For~$n\in\bN$, let~$\bD_n$ be the finite subgroup of~$X$
given by
\[ \bD_n= \{\tfrac{i}{2^n} : 0\leq i \leq 2^n - 1\},\]
and let
\[\bD = \bigcup_{n=0}^{\infty} \bD_n.\]
The countable subgroup~$\bD$ acts on $X$ by translation;
let~$R\subseteq X\times X$ be the corresponding orbit equivalence
relation:
\[R = \{(x,x+r) : x\in X,\ r\in \bD\}.\] For~$r\in \bD$, define
\[\Delta_r=\{(x,x+r)\colon x\in X\}\]
and note that $\{\Delta_r\colon r\in\bD\}$ is a partition of~$R$.

Let ${\bf 1}$ be the $2$-cocycle on~$R$ taking the constant value $1$;
then~$(X,\mu,R,\bf 1)$ is a Feldman-Moore relation. Let~$\nu$ be the
corresponding right counting measure. Clearly, if~$E_r\subseteq
\Delta_r$ is measurable, then
$\nu(E)=\mu(\pi_1(E_r))=\mu(\pi_2(E_r))$. Hence if $E$ is a measurable
subset of~$R$, then for~$j=1,2$ we have
\begin{equation}\label{eq:nu}
  \nu(E) = \sum_{r\in \bD} \nu(E\cap \Delta_r) =
  \sum_{r\in \bD} \mu(\pi_j(E\cap \Delta_r)).
\end{equation}
It is well-known (see e.g.,~\cite{kr}) that $\R=\M(R,\bf1)$ is
(*-isomorphic to) the hyperfinite II$_1$-factor.

  For $1\leq i,j\leq 2^n$, define
\[\Delta^n_{ij} = \left\{\left(x, x + \frac{j-i}{2^n}\right) : \frac{i-1}{2^n} \leq x <
  \frac{i}{2^n}\right\}.\] Let~$\chi_{ij}^n$ be the characteristic
function of~$\Delta_{ij}^n$, and write
\[ \Sigma_n=\spn\{\chi_{ij}^n \colon 1\leq i,j\leq 2^n\}.\] Writing
$L$ for the inverse symbol map of~$R$, let~$\R_n\subseteq \R$ be given
by
\[ \R_n=\{L(a)\colon a\in \Sigma_n\}.\]
We also write
\[\iota_n\colon \R_n\to M_{2^n},\quad \sum_{i,j}
\alpha_{ij}L(\chi_{ij}^n)\mapsto (\alpha_{ij}).\] Recall that $\pw$
denotes pointwise multiplication of symbols. We write $A \odot B$ for
the Schur product of matrices $A,B\in M_k$ for some $k\in \bN$.
\begin{lemma}\leavevmode\label{lem:mus}
  \begin{enumerate}
  \item The set~$\{ L(\chi_{ij}^n)\colon 1\leq i,j\leq 2^n\}$ is a
    matrix unit system in~$\R$.
  \item The map~$\iota_n$ is a $*$-isomorphism. In particular,
    $\iota_n$ is an isometry.
  \item For~$a,b\in \Sigma_n$,
    we have
    \begin{enumerate}
    \item $a\pw b\in \Sigma_n$;
    \item $\iota_n(L(a\pw b))=\iota_n(L(a))\odot
      \iota_n(L(b))$; and
    \item $\|L(a\pw b)\|\leq \|L(a)\|\,\|L(b)\|$.
    \end{enumerate}
  \end{enumerate}
\end{lemma}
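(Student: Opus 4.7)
The plan is to reduce everything to the standard theory of matrix units and Hadamard products on $M_{2^n}$, using the inverse-semigroup framework of Section~\ref{s_sap}.

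For part~(1), I would first observe that each $\Delta_{ij}^n$ is the graph of the partial Borel isomorphism $f_{ij}^n\colon[\tfrac{i-1}{2^n},\tfrac{i}{2^n})\to[\tfrac{j-1}{2^n},\tfrac{j}{2^n})$ given by translation by $\tfrac{j-i}{2^n}\in\bD$; so $f_{ij}^n\in\I(R)$ and $L(\chi_{ij}^n)=V(f_{ij}^n)$. The adjoint relation $L(\chi_{ij}^n)^*=L(\chi_{ji}^n)$ is immediate from Lemma~\ref{l_pin}(1). For the multiplication relations, since $\sigma\equiv1$, a direct application of the convolution formula~\eqref{eq:starsigma} to $a=\chi_{ij}^n$ and $b=\chi_{kl}^n$ shows that $(a*_\sigma b)(x,z)$ is nonzero only when $j=k$, in which case it equals $\chi_{il}^n(x,z)$. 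Thus $L(\chi_{ij}^n)L(\chi_{kl}^n)=\delta_{jk}L(\chi_{il}^n)$. Finally, $\sum_{i=1}^{2^n}L(\chi_{ii}^n)=D\bigl(\sum_i\chi_{[\tfrac{i-1}{2^n},\tfrac{i}{2^n})}\bigr)=D(1)=I$, so we have a full matrix unit system.

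For part~(2), part~(1) shows that $\R_n$ is spanned by a system of $2^n\times 2^n$ matrix units summing to $I$, so the linear extension $\iota_n$ of $L(\chi_{ij}^n)\mapsto e_{ij}$ is a unital $*$-homomorphism onto $M_{2^n}$, hence a $*$-isomorphism (matrix algebras are simple). Since $*$-isomorphisms between $C^*$-algebras are isometric, $\iota_n$ is an isometry.

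For part~(3), the key point is that the family $\{\Delta_{ij}^n\}_{i,j}$ is pairwise disjoint, so $\chi_{ij}^n\pw\chi_{kl}^n=\delta_{(i,j),(k,l)}\chi_{ij}^n$. Writing $a=\sum_{i,j}\alpha_{ij}\chi_{ij}^n$ and $b=\sum_{i,j}\beta_{ij}\chi_{ij}^n$, bilinearity gives
\[ a\pw b=\sum_{i,j}\alpha_{ij}\beta_{ij}\,\chi_{ij}^n\in\Sigma_n,\]
which is (a); applying $\iota_n\circ L$ to both sides yields (b), since the right-hand side corresponds to the entrywise product of the matrices $(\alpha_{ij})$ and $(\beta_{ij})$. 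For (c), I would invoke the classical fact that Hadamard multiplication on $M_{2^n}$ satisfies $\|A\odot B\|\le\|A\|\,\|B\|$ (a standard consequence of, e.g., Schur's theorem on positivity of Hadamard products, or a direct Cauchy--Schwarz argument on the rows/columns); transporting through the isometry $\iota_n$ from~(2) then delivers $\|L(a\pw b)\|\le\|L(a)\|\,\|L(b)\|$.

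There is no real obstacle here; the statement is essentially bookkeeping once one recognises the $\chi_{ij}^n$ as a dyadic matrix unit system. The only point requiring any care is the verification that the twisted convolution with $\sigma=1$ produces the expected matrix-unit multiplication table, which is a short direct check from~\eqref{eq:starsigma} together with the disjointness of the sets $\Delta_{ij}^n$.
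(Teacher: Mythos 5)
Your proof is correct and follows essentially the same route as the paper, which simply asserts that (1) is an easy calculation, that (2), (3a) and (3b) follow immediately, and that (3c) is a consequence of (2), (3b) and the classical bound $\|A\odot B\|\leq\|A\|\,\|B\|$. You have merely written out the computations (identifying $\Delta_{ij}^n$ as graphs in $\I(R)$, verifying the matrix-unit relations from~\eqref{eq:starsigma}, and using disjointness of the $\Delta_{ij}^n$) that the paper leaves implicit.
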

\begin{proof}
  Checking (1) is an easy calculation, and~(2) is then
  immediate. Statement~(3a) is obvious, and (3b) is plain from the
  definition of~$\iota_n$. It is a classical result of matrix theory
  that if~$A,B\in M_k$, then $\|A\odot B\|\leq
  \|A\|\,\|B\|$. Statement~(3c) then follows from~(2) and~(3b).
\end{proof}

Let $\tau\colon \R\to \bC$ be given by
\[\tau(L(a)) = \int_X a(x,x)\,d\mu(x).\]
Since $\nu=\nu^{-1}$, an easy calculation shows that~$\tau$ is a trace
on~$\R$.  

For~$a\in L^\infty(R,\nu)$, let
\[ \lambda_{ij}^n(a)=2^n \int_{(i-1)/2^n}^{i/2^n}
a(x,x+(j-i)/2^n)\,d\mu(x)\] be the average value of~$a$
on~$\Delta_{ij}^n$, and define
\[ E_n\colon \Sigma(R,\mathbf1)\to \Sigma_n,\quad a\mapsto \sum_{i,j} \lambda_{ij}^n(a)\chi_{ij}^n\]
and
\[ \bE_n\colon \R\to \R_n,\quad L(a)\mapsto L(E_n(a)).\]

\begin{lemma}\leavevmode\label{lem:cond}
  $\bE_n$ is the $\tau$-preserving conditional expectation
    of\/~$\R$ onto~$\R_n$. In particular, $\bE_n$ is norm-reducing.
\end{lemma}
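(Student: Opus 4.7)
The plan is to identify $\bE_n(T)$ with the $L^2$-orthogonal projection of $T\in\R\subseteq L^2(\R,\tau)$ onto the finite-dimensional subspace $\R_n$, and then to invoke the standard fact that, in a finite von Neumann algebra with a faithful normal tracial state, the $L^2$-projection onto a $*$-subalgebra is the unique trace-preserving conditional expectation, provided it maps the algebra into the subalgebra.

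The first step is to record the inner products of the matrix units $e_{ij}^n:=L(\chi_{ij}^n)$. Directly from the definition of $\tau$,
\[ \tau(e_{ij}^n)=\int_X\chi_{ij}^n(x,x)\,d\mu(x)=\delta_{ij}\,2^{-n}, \]
because $\chi_{ij}^n(x,x)=1$ forces $(j-i)/2^n\equiv 0\pmod{1}$, i.e.\ $i=j$. Combined with the matrix-unit relations of Lemma~\ref{lem:mus}(1), this gives $\tau((e_{kl}^n)^*e_{ij}^n)=\delta_{ik}\delta_{jl}\,2^{-n}$, so $\{2^{n/2}e_{ij}^n:1\le i,j\le 2^n\}$ is an orthonormal basis of $\R_n$ inside $L^2(\R,\tau)$.

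The heart of the argument is the Fourier-coefficient calculation. For $T=L(a)\in\R$ the convolution formula gives $T(e_{ij}^n)^*=L(a)L(\chi_{ji}^n)=L(a*_{\mathbf 1}\chi_{ji}^n)$. Evaluating on the diagonal and using that $\sigma=\mathbf 1$, the sum defining $(a*_{\mathbf 1}\chi_{ji}^n)(x,x)$ collapses to the single term $a(x,x+(j-i)/2^n)$ when $x\in[(i-1)/2^n,i/2^n)$ and vanishes otherwise (because $\chi_{ji}^n(y,x)=1$ forces $y=x+(j-i)/2^n$ and constrains $x$ to this interval). Thus
\[ \tau(T(e_{ij}^n)^*)=\int_{(i-1)/2^n}^{i/2^n}a\bigl(x,x+\tfrac{j-i}{2^n}\bigr)\,d\mu(x)=2^{-n}\lambda_{ij}^n(a), \]
and the $L^2$-orthogonal projection of $T$ onto $\R_n$ is consequently
\[ \sum_{i,j}2^n\tau(T(e_{ij}^n)^*)\,e_{ij}^n=\sum_{i,j}\lambda_{ij}^n(a)\,e_{ij}^n=L(E_n(a))=\bE_n(T). \]

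Since this projection manifestly takes values in $\R_n\subseteq\R$, standard theory for finite von Neumann algebras (Umegaki/Takesaki) identifies $\bE_n$ with the unique $\tau$-preserving conditional expectation from $\R$ onto $\R_n$; it is therefore norm-reducing, as is every conditional expectation between $C^*$-algebras. The only technical step is the convolution unpacking in the Fourier-coefficient computation, which is a direct consequence of the support of $\chi_{ji}^n$ together with the triviality of the cocycle; everything else is either a matrix-unit bookkeeping exercise using Lemma~\ref{lem:mus} or a standard appeal.
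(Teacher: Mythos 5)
Your proof is correct, but it reaches the conclusion by a different route from the paper. The paper invokes the characterisation of the trace-preserving conditional expectation as the unique $\tau$-preserving $\R_n$-bimodule map (citing Sinclair--Smith, Lemma~3.6.2), and so its two computations are the $\tau$-preservation identity and the convolution identity $E_n(b*_{\mathbf 1}a*_{\mathbf 1}c)=b*_{\mathbf 1}E_n(a)*_{\mathbf 1}c$ for $b,c\in\Sigma_n$. You instead identify $\bE_n$ with the restriction to $\R$ of the orthogonal projection of $L^2(\R,\tau)$ onto $\R_n$, via the Fourier-coefficient computation $\tau\bigl(T(e_{ij}^n)^*\bigr)=2^{-n}\lambda_{ij}^n(a)$ (which correctly collapses the convolution sum using the support of $\chi_{ji}^n$ and the triviality of the cocycle), and then appeal to the Umegaki/Takesaki uniqueness theorem. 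Both arguments ultimately rest on $\tau$ being a faithful normal tracial state and on a standard uniqueness statement; yours has the advantage of exhibiting $\bE_n$ explicitly as a Fourier expansion against the orthonormal basis $\{2^{n/2}e_{ij}^n\}$, which makes the averaging interpretation of $\lambda_{ij}^n$ transparent and yields contractivity and $\tau$-preservation for free, while the paper's bimodule computation is purely algebraic and avoids any inner-product bookkeeping. One small point worth making explicit in your write-up: traciality of $\tau$ is used when you pass between $\tau\bigl(T(e_{ij}^n)^*\bigr)$ and the inner product $\langle T,e_{ij}^n\rangle_\tau=\tau\bigl((e_{ij}^n)^*T\bigr)$, and the paper establishes that $\tau$ is a trace just before the lemma, so this is available.
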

\begin{proof}
  By~\cite[Lemma~3.6.2]{ss-book}, it suffices to show that~$\bE_n$ is
  a $\tau$-preserving $\R_n$-bimodule map. For~$a\in \Sigma(R,\mathbf
  1)$, we have
  \begin{align*} \tau(\bE_n(L(a))) &= \tau(L(E_n(a)))\\
    &= \int E_n(a)(x,x)\,d\mu(x)\\
    &= \sum_{i=1}^{2^n} \lambda_{ii}^n(a) \mu([ (i-1)/2^n,i/2^n))\\
    &= \tau(L(a)),
  \end{align*}
  so $\bE_n$ is $\tau$-preserving. For $b,c\in \Sigma_n$, a calculation gives
  \[ E_n(b *_{\mathbf1} a*_{\mathbf1} c) = b*_{\mathbf 1}E_n(a)*_{\mathbf 1}c,\]
  hence $\bE_n(BTC)=B\bE_n(T)C$ for $B,C\in \R_n$ and $T\in \R$.
\end{proof}

\begin{lemma}\label{lem:condconv}
  Let $a\in \Sigma(R,\mathbf1)$.
  \begin{enumerate}
  \item $\|E_n(a)\|_\infty \leq \|a\|_\infty$.
  \item $E_n(a)\to_{\|\cdot\|_2} a$ as $n\to \infty$.
  \end{enumerate}
\end{lemma}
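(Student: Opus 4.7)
My plan is to prove (1) by a direct averaging estimate, and (2) by combining the conditional expectation structure of $\bE_n$ from Lemma~\ref{lem:cond} with the $\|\cdot\|_2$-density of $\bigcup_n\R_n$ in $\R$.

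For (1), I would observe that $\lambda_{ij}^n(a)$ is the $\mu$-average over $[(i-1)/2^n,i/2^n)$ of the slice function $x\mapsto a(x,x+(j-i)/2^n)$. Because $\nu$ restricted to $\Delta_{(j-i)/2^n}$ is transported to $\mu$ by $\pi_1$, the essential bound $|a|\leq\|a\|_\infty$ transfers to this slice $\mu$-almost everywhere, giving $|\lambda_{ij}^n(a)|\leq\|a\|_\infty$. Since the sets $\Delta_{ij}^n$ are pairwise disjoint for fixed~$n$, I conclude $\|E_n(a)\|_\infty=\max_{i,j}|\lambda_{ij}^n(a)|\leq\|a\|_\infty$.

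For (2), I would first identify the $L^2(R,\nu)$-norm of a symbol with the $\tau$-induced norm of its operator: using $\tau(L(b))=\int b(x,x)\,d\mu(x)$ together with the convolution identity $(a^{*} *_{\mathbf 1} a)(x,x)=\sum_{y\sim x}|a(y,x)|^2$, one obtains $\|L(a)\|_2^2=\tau(L(a)^{*}L(a))=\int_R|a|^2\,d\nu$. Thus the assertion is equivalent to $\bE_n(T)\to T$ in $\|\cdot\|_2$ for every $T=L(a)\in\R$. By Lemma~\ref{lem:cond}, $\bE_n$ is a conditional expectation, hence $\|\cdot\|_2$-contractive. The subalgebras $\R_n$ form an increasing chain: subdividing $[(i-1)/2^n,i/2^n)$ into its two dyadic halves yields $\chi_{ij}^n=\chi_{2i-1,2j-1}^{n+1}+\chi_{2i,2j}^{n+1}$, so $\Sigma_n\subseteq\Sigma_{n+1}$, and $\R_n\cong M_{2^n}$ by Lemma~\ref{lem:mus}(2). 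The weak closure of $\bigcup_n\R_n$ is therefore all of $\R$, by the standard construction of the hyperfinite II$_1$-factor already cited. Now Kaplansky density combined with Proposition~\ref{p_sconv} shows that every $T\in\R$ is a $\|\cdot\|_2$-limit of a bounded net from $\bigcup_n\R_n$; since $\bE_n$ fixes $\R_m$ pointwise whenever $n\geq m$, a standard $3\epsilon$-argument then yields $\bE_n(T)\to T$ in $\|\cdot\|_2$, as required.

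I do not foresee any serious obstacle. The only subtle points are the coincidence of the two $\|\cdot\|_2$-norms on symbols and the $\|\cdot\|_2$-density of $\bigcup_n\R_n$ in $\R$; both follow quickly from the framework already developed in Sections~\ref{s_prel} and~\ref{s_sm}, in particular the bounded-net equivalence of SOT and $\|\cdot\|_2$ convergence from Proposition~\ref{p_sconv}.
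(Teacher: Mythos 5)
Your argument is correct, and part (2) takes a genuinely different route from the paper. The paper disposes of (2) in two lines: it quotes from Pop--Smith the fact that $\bE_n(T)\to_{\mathrm{SOT}}T$ for every $T\in\R$, and then applies the SOT-to-$\|\cdot\|_2$ continuity of the symbol map (Proposition~\ref{prop:continuity}(6)). You instead reprove that convergence intrinsically, via a martingale-type argument: the identity $\|a\|_{L^2(R,\nu)}^2=\tau(L(a)^*L(a))$ (which you verify correctly, and which lets you work with the trace $2$-norm throughout), the nesting $\Sigma_n\subseteq\Sigma_{n+1}$ coming from $\chi_{ij}^n=\chi_{2i-1,2j-1}^{n+1}+\chi_{2i,2j}^{n+1}$, the $\|\cdot\|_2$-contractivity of the $\tau$-preserving conditional expectations $\bE_n$, and a $3\epsilon$-argument using that $\bE_n$ fixes $\R_m$ for $n\ge m$. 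All of these steps are sound. The one place where you assert rather than prove is the weak density of $\bigcup_n\R_n$ in $\R$; this is standard for this coordinatisation (it is exactly what underlies the Pop--Smith fact the paper cites), but it is not literally contained in the statement that $\M(R,\mathbf 1)$ is $*$-isomorphic to the hyperfinite II$_1$ factor, so strictly speaking you are trading one citation for another of comparable weight. What your approach buys is self-containedness modulo that density statement, and it makes transparent exactly which structural facts ($\tau$-preservation, nesting, contractivity) drive the convergence; what the paper's approach buys is brevity. Part (1) is the same in both: your averaging estimate is just the paper's ``follows directly from the definition of $E_n$'' written out, and your observation that the $\Delta_{ij}^n$ are pairwise disjoint for fixed $n$ correctly justifies passing from the bound on each $\lambda_{ij}^n(a)$ to the essential supremum bound.
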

\begin{proof}
  (1) follows directly from the definition of~$E_n$.%

  (2) For $T\in \R$, we have $\bE_n(T)\to_{\mathrm{SOT}} T$ as $n\to
  \infty$ (see e.g.,~\cite{ps}).  By
  Proposition~\ref{prop:continuity}(6),
  \[ E_n(a) = s(\bE_n(L(a))) \to_{\|\cdot\|_2} s(L(a))=a.\qedhere\]
\end{proof}

\begin{theorem}\label{th_inc}
  We have $\Sigma(R,\mathbf1)\subseteq \fS(R,\mathbf1)$. Moreover, if
  $a,b\in\Sigma(R,\mathbf1)$, then $\|L(a\pw b)\|\leq
  \|L(a)\|\|L(b)\|$.
\end{theorem}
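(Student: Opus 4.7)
The plan is to approximate $a,b\in\Sigma:=\Sigma(R,\mathbf 1)$ by their conditional expectations $a_n=E_n(a)$, $b_n=E_n(b)$ onto the finite-dimensional subspaces $\Sigma_n$, exploit the matrix Schur-product bound of Lemma~\ref{lem:mus}(3) in the finite-dimensional setting, and then pass to the limit using weak* compactness in $\R$ together with the matching of weak and pointwise limits provided by Lemma~\ref{lem:pw-weak}.

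First, by Lemma~\ref{lem:mus}(3), for every $n$ we have $a_n\pw b_n\in \Sigma_n\subseteq \Sigma$, with $\|L(a_n\pw b_n)\|\leq \|L(a_n)\|\,\|L(b_n)\|$. Since $L(a_n)=\bE_n(L(a))$ and $\bE_n$ is a conditional expectation (Lemma~\ref{lem:cond}), it is norm-reducing, so $\|L(a_n)\|\leq \|L(a)\|$ and similarly $\|L(b_n)\|\leq \|L(b)\|$. Consequently $\{L(a_n\pw b_n)\}_{n\geq 1}$ is a sequence in $\R$ satisfying $\|L(a_n\pw b_n)\|\leq \|L(a)\|\,\|L(b)\|$ for every $n$.

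Second, since $H=L^2(R,\nu)$ is separable, closed bounded sets of $\R$ are weak*-sequentially compact, so I can extract a subsequence with $L(a_{n_k}\pw b_{n_k})\to T$ in the weak* topology for some $T\in\R$. On bounded sets weak* convergence coincides with the WOT, so applying to $\chi_\Delta$ gives that the symbols $s(L(a_{n_k}\pw b_{n_k}))=a_{n_k}\pw b_{n_k}$ converge weakly in $H$ to $s(T)$. By Lemma~\ref{lem:condconv}(2), $a_{n_k}\to a$ and $b_{n_k}\to b$ in $\|\cdot\|_2$; passing to a further subsequence I may assume both convergences hold almost everywhere, whence $a_{n_k}\pw b_{n_k}\to a\pw b$ almost everywhere.

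Third, Proposition~\ref{prop:continuity}(2) yields the uniform bound $\|a_n\pw b_n\|_\infty\leq \|L(a_n\pw b_n)\|\leq \|L(a)\|\,\|L(b)\|$. Noting that $\nu$ is $\sigma$-finite (since $R=\bigsqcup_{r\in\bD}\Delta_r$ with $\nu(\Delta_r)=1$) and that $a\pw b\in L^2(R,\nu)$ (because $a\in L^2$, $b\in L^\infty$), Lemma~\ref{lem:pw-weak} identifies the weak $L^2$ limit with the almost-everywhere limit, giving $s(T)=a\pw b$. Hence $a\pw b\in\Sigma$ with $L(a\pw b)=T$, which proves $\Sigma\subseteq\fS$. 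The norm bound $\|L(a\pw b)\|=\|T\|\leq \liminf_k\|L(a_{n_k}\pw b_{n_k})\|\leq \|L(a)\|\,\|L(b)\|$ then follows by weak* lower semicontinuity of the operator norm.

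The main technical obstacle is reconciling the two modes of convergence: weak $L^2$ convergence of symbols (coming from weak* convergence of operators in $\R$) versus almost-everywhere convergence of $a_n\pw b_n$ to $a\pw b$ (coming from $L^2$ convergence of $a_n\to a$, $b_n\to b$). The uniform $L^\infty$ control furnished by Proposition~\ref{prop:continuity}(2) and the matrix Schur bound is exactly what allows Lemma~\ref{lem:pw-weak} to identify these two limits, and this step is the crux of the argument.
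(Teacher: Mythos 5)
Your proof is correct, and it follows the paper's overall strategy (approximate $a,b$ by the conditional expectations $E_n(a),E_n(b)$, invoke Lemma~\ref{lem:mus} and Lemma~\ref{lem:cond} for the uniform bound $\|L(a_n\pw b_n)\|\leq\|L(a)\|\,\|L(b)\|$), but you handle the passage to the limit by a genuinely different route. The paper avoids compactness entirely: it proves \emph{norm} convergence $a_n\pw b_n\to_{\|\cdot\|_2}a\pw b$ directly from the estimate $\|a_n\pw b_n-a\pw b\|_2\leq\|a_n\|_\infty\|b_n-b\|_2+\|b\|_\infty\|a_n-a\|_2$ together with Lemma~\ref{lem:condconv}, and then applies Proposition~\ref{p_sconv}(1), which says that a bounded sequence in $\M$ whose symbols converge in $\|\cdot\|_2$ converges in the SOT; this yields the limit operator $T$ with $s(T)=a\pw b$ without extracting any subsequence. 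You instead obtain $T$ first, by weak* sequential compactness of the ball (legitimate, since $H$ is separable so $\R_*$ is separable), and then identify $s(T)$ with $a\pw b$ by reconciling weak $L^2$ convergence of symbols with almost-everywhere convergence along a further subsequence via Lemma~\ref{lem:pw-weak} --- which is precisely the device the paper reserves for Theorem~\ref{p_aeha}, where no norm convergence is available. Both arguments are complete; the paper's is slightly more economical here (one fewer appeal to subsequences, no need to check $\sigma$-finiteness of $\nu$ or membership of $a\pw b$ in $L^2$), while yours has the virtue of unifying the limiting mechanism of Theorems~\ref{th_inc} and~\ref{p_aeha}. Your checks of the hypotheses of Lemma~\ref{lem:pw-weak} ($\sigma$-finiteness of $\nu$, the uniform $L^\infty$ bound via Proposition~\ref{prop:continuity}(2), and $a\pw b\in L^2$) are all correct, as is the final appeal to weak* lower semicontinuity of the norm in place of the paper's SOT lower semicontinuity.
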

\begin{proof}
  Let $a,b\in \Sigma(R,\mathbf1)$, and for $n\in\bN$, let $a_n=E_n(a)$
  and $b_n=E_n(b)$.  Lemmas~\ref{lem:mus} and~\ref{lem:cond} give
  \begin{equation}\label{eq2}
    \|L(a_n\pw b_n)\|
    \leq \|L(a_n)\|\,\|L(b_n)\|
    = \|\bE_n(L(a))\|\,\|\bE_n(L(b))\|
     \leq \|L(a)\|\|L(b)\|.
\end{equation}
On the other hand,
\begin{align*}
\|a_n\pw b_n - a\pw b\|_2
& \leq  \|a_n \pw (b_n - b)\|_2 + \|b\pw (a_n - a)\|_2\nonumber \\
& \leq  \|a_n\|_{\infty} \|(b_n - b)\|_2 + \|b\|_{\infty} \|(a_n - a)\|_2
\end{align*}
so by Lemma~\ref{lem:condconv},
\[ a_n\pw b_n\to_{\|\cdot\|_2} a\pw b.\] Let $T_n=L(a_n\pw
b_n)$. Since~$(T_n)$ is bounded by~(\ref{eq2}),
Proposition~\ref{p_sconv} shows that $(T_n)$ converges in the strong
operator topology, say to $T\in \R$, and \[a_n\pw
b_n=s(T_n)\to_{\|\cdot\|_2} s(T).\] Hence $a\pw b=s(T)\in
\Sigma(R,\mathbf1)$, so $a\in \fS(R,\mathbf1)$.

Since $T_n\to_{\mathrm{SOT}}T$, we have $\|T\|\leq \limsup_{n\to
  \infty} \|T_n\|$. Hence by~(\ref{eq2}),
\[\|L(a\pw b)\|\leq \limsup_{n\to \infty} \|L(a_n \pw b_n)\| \leq
\|L(a)\|\|L(b)\|.\qedhere\]
\end{proof}

\begin{remark}\label{remark:popsmith}
  For each masa $\A\subseteq \R$, Pop and Smith define a Schur product
  $\schur_\A \colon \R\times \R\to \R$ in~\cite{ps}. The proof of
  Theorem~\ref{th_inc} shows that for the specific Feldman-Moore
  coordinatisation $(X,\mu,R,\mathbf1)$ described above and the masa
  $\A=\A(R)\subseteq \R=\M(R,\mathbf1)$, if we identify operators
  in~$\R$ with their symbols, then Definition~\ref{d_sh} extends
  $\schur_\A$ to a map $\fS(R,\mathbf1)\times \R\to \R$. It is easy to
  see that this is a proper extension: the constant function
  $\phi(x,y)=1$ is plainly in~$\fS(R,\mathbf1)$, but $\phi$ is not the
  symbol of an operator in~$\R$ (\cite[Remark 3.3]{ps}). 
\end{remark}

\begin{corollary}\label{cor:inc}
  Let~$\R$ be the hyperfinite II$_1$ factor, and let~$\tilde \A$ be
  any masa in~$\R$. For any Feldman-Moore coordinatination $(\tilde
  X,\tilde \mu,\tilde R,\tilde \sigma)$ of the Cartan pair~$(\R,\tilde
  \A)$, we have $\Sigma(\tilde R,\tilde \sigma)\subseteq \fS(\tilde
  R,\tilde \sigma)$.
\end{corollary}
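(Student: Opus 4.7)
The plan is to reduce the corollary to Theorem~\ref{th_inc} by transferring the inclusion $\Sigma\subseteq \fS$ along a Feldman--Moore isomorphism. The key input is the classical uniqueness of the Cartan pair in the hyperfinite II$_1$ factor up to isomorphism: any Cartan pair $(\R,\tilde\A)$ is isomorphic to the Cartan pair $(\M(R,\mathbf1),\A(R))$ arising from the standard coordinatisation of Section~\ref{s_h21}. This follows from the Connes--Feldman--Weiss theorem, which establishes uniqueness up to orbit equivalence of the amenable ergodic countable measure-preserving equivalence relation, together with the fact that in this setting any $2$-cocycle is cohomologous to the trivial one.

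Given this input, the argument is short. First, I would apply Theorem~\ref{thm:fmii1} to conclude that the Feldman--Moore relations $(\tilde X,\tilde\mu,\tilde R,\tilde\sigma)$ and $(X,\mu,R,\mathbf1)$ are isomorphic in the sense of Definition~\ref{def:isorel}; fix an implementing Borel isomorphism $\rho\colon X\to\tilde X$. Second, I would invoke Proposition~\ref{p_shpre} to obtain the transfer map $\tilde\rho\colon a\mapsto a\circ\rho^{-2}$, which is simultaneously a bijection from $\Sigma(R,\mathbf1)$ onto $\Sigma(\tilde R,\tilde\sigma)$ and an isometric isomorphism from $\fS(R,\mathbf1)$ onto $\fS(\tilde R,\tilde\sigma)$.

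Finally, given any $\phi\in\Sigma(\tilde R,\tilde\sigma)$, use bijectivity of $\tilde\rho$ to write $\phi=\tilde\rho(\psi)$ for some $\psi\in\Sigma(R,\mathbf1)$; by Theorem~\ref{th_inc}, $\psi\in\fS(R,\mathbf1)$, and applying $\tilde\rho$ again then gives $\phi\in\fS(\tilde R,\tilde\sigma)$. This yields the desired inclusion.

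The principal obstacle in this route is not technical within the paper's own machinery—once the Feldman--Moore relations are known to be isomorphic, everything else is immediate from Proposition~\ref{p_shpre} and Theorem~\ref{th_inc}. Rather, the weight of the argument lies in the external appeal to the uniqueness of Cartan subalgebras of $\R$ up to conjugation, a deep result which must be cited from the literature. If one wished to avoid this citation, one would instead need to reprove Theorem~\ref{th_inc} directly for an arbitrary Feldman--Moore coordinatisation of $\R$, which would require constructing, intrinsically in $(\tilde X,\tilde\mu,\tilde R,\tilde\sigma)$, a tower of finite-dimensional matrix subalgebras compatible with $\tilde\A$ and a corresponding sequence of trace-preserving conditional expectations playing the role of $\bE_n$—essentially reproving hyperfiniteness with Cartan structure by hand.
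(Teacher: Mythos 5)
Your proof is correct and follows essentially the same route as the paper: it likewise cites Connes--Feldman--Weiss to get $(\R,\tilde\A)\cong(\R,\A)$, then applies Theorem~\ref{thm:fmii1}, Proposition~\ref{p_shpre} and Theorem~\ref{th_inc} to transfer the inclusion $\Sigma\subseteq\fS$ along the resulting isomorphism of Feldman--Moore relations.
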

\begin{proof}
  By~\cite{cfw}, we have $(\R,\tilde \A)\cong (\R,\A)$. Hence by
  Theorem~\ref{thm:fmii1}, \[(\tilde X,\tilde \mu, \tilde R,\tilde
  \sigma)\cong (X,\mu,R,\mathbf1)\] via an isomorphism $\rho\colon
  \tilde X\to X$. Consider the map $\tilde \rho\colon a\mapsto a\circ
  \rho^{-2}$ as in Proposition~\ref{p_shpre}. By Theorem~\ref{th_inc},
  \[ \Sigma(\tilde R,\tilde \sigma)=\tilde \rho(\Sigma(R,\mathbf1))
  \subseteq \tilde\rho(\fS(R,\mathbf1))=\fS(\tilde
  R,\tilde\sigma).\qedhere\]
\end{proof}

In view of Theorem~\ref{th_inc} and Proposition~\ref{prop:sigma0}, it
is natural to ask the following question.

\begin{question}
  Does the inclusion $\Sigma(R,\sigma)\subseteq \fS(R,\sigma)$ hold
  for an arbitrary Feldman-Moore relation $(X,\mu,R,\sigma)$?
\end{question}

We now turn to the inclusion
\[ \fA(R)\subseteq \fS(R,\sigma)\] established in
Section~\ref{s_AR}. While these sets are equal in the classical
case, we will show that in the current context this inclusion is
proper.

\newcommand{\Rd}{R}

For~$D\subseteq\bD$, we define
\[\Delta(D)=\bigcup_{r\in D} \Delta_r.\]
Note that $\Delta(D)$ is marginally null only if $D=\emptyset$, and
its characteristic function $\chi_{\Delta(D)}$ is a ``Toeplitz''
idempotent element of $L^\infty(R,\nu)$.

\begin{proposition}\label{prop:dyad-A(Rd)}\leavevmode
\begin{enumerate}
\item If\/ $\emptyset\ne D\subsetneq \bD$ and either $D$ or\/
  $\bD\setminus D$ is dense in $[0,1)$, then the characteristic
  function~$\chi_{\Delta(D)}$ is not in\/~$\fA(\Rd)$.

  \item  Let $0\ne \phi\in L^\infty(\Rd)$ and
    \[ E=\{ r\in \bD\colon \phi|_{\Delta_r}=0\
    \mu\text{-a.e.}\}.\]If $E$ is dense in $[0,1)$, then $\phi\not\in
    \fA(\Rd)$.
  \end{enumerate}
\end{proposition}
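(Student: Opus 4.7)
The plan is to prove (2) first and then deduce (1) from it by linearity, using the fact that $\fA(\Rd)$ is a linear subspace of $L^\infty(R,\nu)$ that contains the constant function $1$ (since $1_X\otimes 1_X\in\A\otimes_{\eh}\A$ has $f_u\equiv 1$).

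For (2), I would argue by contradiction: suppose $\phi=\phi_u$ for some $u\in\A\otimes_{\eh}\A$, with corresponding function $f_u$ on $X\times X$. By Lemma~\ref{l_wd}, every marginally null subset of $X\times X$ meets $R$ in a $\nu$-null set, so since $\phi\ne 0$ as an element of $L^\infty(R,\nu)$, the function $f_u$ is not marginally zero. The next step exploits $\omega$-continuity: for some $n\in\bN$, the preimage $S_n:=f_u^{-1}(\{z:|z|>1/n\})$ is an $\omega$-open set that is not marginally null. Since $\omega$-openness means $S_n$ is marginally equivalent to a countable union $\bigcup_k \alpha_k\times\beta_k$ of measurable rectangles, and a countable union of marginally null sets is marginally null, at least one rectangle $\alpha\times\beta$ in the union is itself not marginally null, i.e.\ $\mu(\alpha),\mu(\beta)>0$, and $|f_u|>1/n$ holds on $\alpha\times\beta$ off a marginally null exceptional set.

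The heart of the argument is a density step that ties the group structure of $X$ to the assumption that $E$ is dense in $[0,1)$. Define
\[ g(r)=\mu(\alpha\cap(\beta-r))=\langle \chi_\alpha,T_{-r}\chi_\beta\rangle_{L^2(X,\mu)},\quad r\in[0,1),\]
where $T_s$ denotes translation. Strong continuity of translation on $L^2(X,\mu)$ makes $g$ continuous on $[0,1)$, and Fubini gives $\int_0^1 g(r)\,dr=\mu(\alpha)\mu(\beta)>0$, so $g$ is strictly positive on a nonempty open set $U\subseteq[0,1)$. By density of $E$, choose $r\in U\cap E$. Then $A:=\alpha\cap(\beta-r)$ has positive $\mu$-measure, and for a.e.\ $x\in A$ the pair $(x,x+r)$ lies in $\alpha\times\beta$ off the marginally null exceptional set, giving $|f_u(x,x+r)|>1/n$. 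On the other hand, $r\in E$ forces $\phi(x,x+r)=0$ for a.e.\ $x\in X$, and since $f_u|_R=\phi$ $\nu$-a.e.\ and $\nu|_{\Delta_r}$ corresponds to $\mu$ under $\pi_1$, we also have $f_u(x,x+r)=0$ for a.e.\ $x$. These two statements contradict each other on a set of positive $\mu$-measure, completing the proof of (2).

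For (1), I would distinguish two cases. If $\bD\setminus D$ is dense in $[0,1)$, apply (2) directly to $\phi=\chi_{\Delta(D)}$: this is nonzero since $D\ne\emptyset$, and its vanishing set $E=\bD\setminus D$ is dense. If instead $D$ is dense (and $\bD\setminus D\ne\emptyset$), apply (2) to $\chi_{\Delta(\bD\setminus D)}$, whose vanishing set is $D$; this gives $\chi_{\Delta(\bD\setminus D)}\notin\fA(\Rd)$, and since $\chi_{\Delta(D)}=1-\chi_{\Delta(\bD\setminus D)}$ and $\fA(\Rd)$ is a linear subspace containing~$1$, we conclude $\chi_{\Delta(D)}\notin\fA(\Rd)$ as well. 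The principal obstacle is the passage between marginally-almost-everywhere statements on $X\times X$ and $\nu$-almost-everywhere statements on $R$; once that bookkeeping is clean, the continuity–plus–density argument delivers the contradiction.
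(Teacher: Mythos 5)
Your proof is correct and follows essentially the same route as the paper's: extract a non-marginally-null rectangle $\alpha\times\beta$ from an $\omega$-open level set of the $\omega$-continuous extension $f_u$ (this is exactly the paper's appeal to \cite{eks}), then use density of the set of vanishing directions to produce a point $(x,x+r)$ of the rectangle lying on a diagonal $\Delta_r$ where $\phi$ vanishes, giving a contradiction. The only differences are minor: you prove (2) first and deduce (1) by linearity (the paper does the reverse), and you replace the paper's appeal to Steinhaus' theorem for $\beta'-\alpha'$ by the continuity of $r\mapsto\mu(\alpha\cap(\beta-r))$, which in fact yields a positive-measure set of witnesses $x$ and so absorbs the various a.e.\ exceptional sets cleanly.
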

\begin{proof}
  (1) Suppose first that $\bD\setminus D$ is dense in $[0,1)$ and, by
  way of contradiction, that $\chi_{\Delta(D)}\in\fA(\Rd)$. There is
  an element $\sum_{i=1}^\infty a_i\otimes b_i\in \A\otimes_{\eh}\A$ and a
  $\nu$-null set $N\subseteq \Rd$ such
  that \[\chi_{\Delta(D)}(x,y)=\sum_{i=1}^\infty a_i(x)b_i(y)\ \text{for all
    $(x,y)\in \Rd\setminus N$.}\]
  Let $f\colon X\times X\to \bC$ be the extension of $\chi_{\Delta(D)}$
  which is defined (up to a marginally null set) by
  \[f(x,y)=\sum_{i=1}^\infty a_i(x)b_i(y)\ \text{for marginally almost
    every~$(x,y)\in X\times X$}.\] By~\cite[Theorem~6.5]{eks}, $f$ is
  $\omega$-continuous. Hence the set \[F=f^{-1}(\bC\setminus \{0\})\]
  is $\omega$-open. Since $D\ne\emptyset$ and $\Delta(D)\subseteq F$,
  the set~$F$ is not marginally null.  So there exist Borel sets
  $\alpha,\beta\subseteq [0,1)$ with non-zero Lebesgue measure so that
  $\alpha\times \beta\subseteq F$. For~$j=1,2$, let $N_j=\pi_j(N)$. By
  equation~(\ref{eq:nu}), $\mu(N_j)=0$. Let $\alpha'=\alpha\setminus
  N_1$ and $\beta'=\beta\setminus N_2$; then $\alpha'$ and $\beta'$
  have non-zero Lebesgue measure, and hence the set \[\beta'-\alpha' =
  \{ y-x\colon x\in \alpha',\,y\in \beta'\}\] contains an open
  interval by Steinhaus' theorem, so it intersects the dense
  set~$\bD\setminus D$. So there exist~$r\in \bD\setminus D$ and $x\in
  \alpha'$ with $x+r\in \beta'$. Now
  \[ (x,x+r)\in F \setminus \Delta(D),\]
  so
  \[0\ne f(x,x+r)=\chi_{\Delta(D)}(x,x+r)=0,\] a contradiction. So
  $\chi_{\Delta(D)}\not\in \fA(\Rd)$ if $D\ne\emptyset$ and $\bD\setminus D$ is dense in
  $[0,1)$.

  If $D\ne \bD$ and~$D$ is dense in $[0,1)$ then
  $\chi_{\Delta(\bD\setminus D)}\not\in \fA(\Rd)$; since $\fA(\Rd)$ is
  a linear space containing the constant function $1$, this shows
  that $1-\chi_{\Delta(\bD\setminus
    D)}=\chi_{\Delta(D)}\not\in\fA(\Rd)$.\medskip

  (2) The argument is similar. If $\phi\in\fA(\Rd)$ then there is a
  $\nu$-null set $N\subseteq \Rd$ such that $\phi(x,y)=\sum_{i=1}^{\infty}
  a_i(x)b_i(y)$ for all $(x,y)\in \Rd\setminus N$ where $\sum_{i=1}^{\infty}
  a_i\otimes b_i\in \A\otimes_{\eh}\A$, and $\phi(x,y)=0$ for all
  $(x,y)\in \Rd\setminus N$ with the property $y-x\in E$. Let $f\colon
  [0,1)^2\to \bC$, $f(x,y)=\sum_{i=1}^{\infty} a_i(x)b_i(y)$, $x,y\in [0,1)$.
  Then $f$ is non-zero
  and $\omega$-continuous, so $f^{-1}(\bC\setminus\{0\})$ contains
  $\alpha'\times \beta'$ where $\alpha',\beta'$ are sets of non-zero
  measure so that $(\alpha'\times \beta')\cap N = \emptyset$. Hence
  $\beta'-\alpha'$ contains an open interval of $[0,1)$, and
  intersects the dense set $E$ in at least one point $r\in \bD$; so
  there is $x\in [0,1)$ such that $(x,x+r)\in (\alpha'\times\beta')\cap
  (\Rd\setminus N)$. Then $0=\phi(x,x+r)=f(x,x+r)\ne0$, a
  contradiction.
\end{proof}

\begin{corollary}\label{cor:Rdinclusion}
  The inclusion $\fA(R)\subseteq \fS(R,\mathbf1)$ is proper.
\end{corollary}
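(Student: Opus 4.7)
The plan is to combine Proposition~\ref{prop:dyad-A(Rd)}(1), which produces explicit functions outside $\fA(R)$, with a fact from earlier in the paper that produces Schur multipliers. Since we have already established $\fA(R)\subseteq \fS(R,\mathbf{1})$ in Section~\ref{s_AR}, all that remains is to exhibit a single element of $\fS(R,\mathbf{1})\setminus \fA(R)$.

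The most economical candidate is the characteristic function $\chi_\Delta$ of the diagonal. First I would observe that $\chi_\Delta \in \Sigma_0(R,\mathbf{1})$ (this is Example~\ref{ex_d}), and invoke Proposition~\ref{prop:sigma0} to conclude $\chi_\Delta \in \fS(R,\mathbf{1})$. Second, I would write $\chi_\Delta = \chi_{\Delta(\{0\})}$, where the subset $D=\{0\}$ of $\bD$ satisfies $\emptyset\ne D\subsetneq\bD$, and the complement $\bD\setminus D = \bD\setminus\{0\}$ is dense in $[0,1)$ because the dyadic rationals are dense there. Proposition~\ref{prop:dyad-A(Rd)}(1) then applies to give $\chi_\Delta \notin \fA(R)$. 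Combining these two observations finishes the proof.

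There is no real obstacle here; the work has been done in Proposition~\ref{prop:dyad-A(Rd)} (where the $\omega$-continuity argument combined with Steinhaus' theorem rules out membership in $\fA(R)$) and in Proposition~\ref{prop:sigma0}/Example~\ref{ex_d} (where band-limited supports yield Schur multipliers). The corollary is essentially a one-line application of these two results to the same function $\chi_\Delta$.
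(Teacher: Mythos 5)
Your proposal is correct and matches the paper's proof: both exhibit $\chi_\Delta=\chi_{\Delta(\{0\})}$ as the witness, using Proposition~\ref{prop:dyad-A(Rd)}(1) (with $\bD\setminus\{0\}$ dense) to exclude it from $\fA(R)$. The only cosmetic difference is that you justify $\chi_\Delta\in\fS(R,\mathbf 1)$ via Example~\ref{ex_d}/Proposition~\ref{prop:sigma0}, whereas the paper notes that $M(\chi_\Delta)$ is the conditional expectation onto $\A$; both are valid.
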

\begin{proof}
  Since~$\Delta=\Delta(\{0\})$, Proposition~\ref{prop:dyad-A(Rd)}
  shows that $\chi_\Delta\not\in \fA(R)$.  It is easy to check (as in
  Lemma~\ref{lem:cond}) that the Schur multiplication map
  $M(\chi_{\Delta})$ is the conditional expectation of $\R$ onto $\A$,
  so $\chi_\Delta\in \fS(R,\mathbf1)$.
\end{proof}

\begin{corollary}\label{c_noncbs}
  Let~$(\tilde X,\tilde \mu,\tilde R,\tilde \sigma)$ be a
  Feldman-Moore relation and suppose that $\M(\tilde R,\tilde \sigma)$
  contains a direct summand isomorphic to the hyperfinite~{\rm II}$_1$
  factor. Then the inclusion~$\fA(\tilde R)\subseteq \fS(\tilde
  R, \tilde \sigma)$ is proper.
\end{corollary}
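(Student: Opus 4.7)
The plan is to reduce the problem to the case already treated in Corollary~\ref{cor:Rdinclusion} by localising to the hyperfinite direct summand.

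The first step is to recognise the direct summand inside the Feldman-Moore coordinatisation. Let $P\in\M(\tilde R,\tilde\sigma)$ be the central projection for the summand isomorphic to $\R$. Since the centre of $\M(\tilde R,\tilde\sigma)$ is contained in every masa, $P\in\tilde\A$, so by Lemma~\ref{l_pin}(3) we have $P=P(\delta)$ for some Borel set $\delta\subseteq\tilde X$. Centrality forces $\delta$ to be $\tilde R$-invariant (up to $\tilde\mu$-null sets), so the restriction $\tilde R_\delta=\tilde R\cap(\delta\times\delta)$, with the induced measure and cocycle, is itself a Feldman-Moore relation, and the associated Cartan pair $(P\M(\tilde R,\tilde\sigma)P,P\tilde\A)$ is isomorphic to $(\R,\A')$ for some Cartan masa $\A'\subseteq\R$.

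The second step is to produce an obstruction living in $\tilde R_\delta$. By \cite{cfw} we have $(\R,\A')\cong(\R,\A)$ where $\A$ is the dyadic Cartan masa of Section~\ref{s_h21}, so by Theorem~\ref{thm:fmii1} the relations $\tilde R_\delta$ and $(X,\mu,R,\mathbf 1)$ are isomorphic as Feldman-Moore relations. Via Proposition~\ref{p_shpre} the strict inclusion $\fA\subsetneq\fS$ of Corollary~\ref{cor:Rdinclusion} transports to $\tilde R_\delta$ (note that the extended Haagerup tensor product of a commutative von Neumann algebra is an intrinsic invariant, so isomorphism of Cartan pairs carries $\fA$ to $\fA$). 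Choose $\tilde\phi\in\fS(\tilde R_\delta,\tilde\sigma|_{\tilde R_\delta})\setminus \fA(\tilde R_\delta)$; the characteristic function of the diagonal of $\tilde R_\delta$ (which implements the canonical conditional expectation onto the Cartan masa) works.

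The third step is to lift $\tilde\phi$ to $\tilde R$. Define $\phi\colon\tilde R\to\bC$ by $\phi=\tilde\phi$ on $\tilde R_\delta$ and $\phi=0$ elsewhere. For any $T\in\M(\tilde R,\tilde\sigma)$, the invariance of $\delta$ gives $s(PT)=\chi_{\tilde R_\delta}\pw s(T)$, so $\phi\pw s(T)=\phi\pw s(PT)$. Under the identification $P\M(\tilde R,\tilde\sigma)P\cong\M(\tilde R_\delta,\tilde\sigma|_{\tilde R_\delta})$, the operator $PT$ has symbol $s(T)|_{\tilde R_\delta}$; since $\tilde\phi\in\fS(\tilde R_\delta,\tilde\sigma|_{\tilde R_\delta})$, the product $\tilde\phi\pw s(T)|_{\tilde R_\delta}$ lies in $\Sigma(\tilde R_\delta,\tilde\sigma|_{\tilde R_\delta})$, and pulling back through the inclusion $P\M(\tilde R,\tilde\sigma)P\hookrightarrow\M(\tilde R,\tilde\sigma)$ shows that $\phi\pw s(T)\in\Sigma(\tilde R,\tilde\sigma)$. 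Hence $\phi\in\fS(\tilde R,\tilde\sigma)$. Conversely, if $\phi$ were in $\fA(\tilde R)$, say $\phi=\phi_u$ with $u=\sum_i A_i\otimes B_i\in\tilde\A\otimes_{\eh}\tilde\A$, then $u'=\sum_i (PA_i)\otimes(PB_i)$ still lies in $\tilde\A\otimes_{\eh}\tilde\A$ (and in fact in $P\tilde\A P\otimes_{\eh} P\tilde\A P$), and its associated function restricts on $\tilde R_\delta$ to $\tilde\phi$; this would force $\tilde\phi\in\fA(\tilde R_\delta)$, contradicting our choice of $\tilde\phi$.

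The main technical obstacle is verifying that the Feldman-Moore data restricts cleanly to the invariant set $\delta$ and that this restriction is compatible with symbols, the Cartan masa, and the extended Haagerup tensor product. Once that bookkeeping is in place, the cut-down argument of the last paragraph is routine.
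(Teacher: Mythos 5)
Your proposal is correct and follows essentially the same route as the paper: cut down by the central projection, identify the summand's Cartan pair with the dyadic coordinatisation of $(\R,\A)$ via \cite{cfw} and Theorem~\ref{thm:fmii1}, extend the obstruction of Corollary~\ref{cor:Rdinclusion} by zero off the summand, and compress a putative representation in $\tilde\A\otimes_{\eh}\tilde\A$ by $P$ to reach a contradiction. The only organisational difference is that the paper sidesteps your ``restriction bookkeeping'' by using the uniqueness clause of Theorem~\ref{thm:fmii1} to replace $\tilde R$ outright by an isomorphic disjoint union $R_0\cup R$, rather than restricting the Feldman--Moore data to the invariant set $\delta$.
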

\begin{proof}
  Let~$P$ be a central projection in~$\M(\tilde R,\tilde \sigma)$ so
  that~$P\M(\tilde R, \tilde\sigma)$ is (isomorphic to) the
  hyperfinite II$_1$ factor~$\R$. It is not difficult to verify
  that~$\A_P=P\A(\tilde R)$ is a Cartan masa in~$\R$ (see the
  arguments in the proof of~\cite[Theorem~1]{fm2}). By~\cite{cfw}, the
  Cartan pair~$(\R,\A_P)$ is isomorphic to the Cartan pair~$(\R,\A)$
  considered throughout this section. It follows from
  Theorem~\ref{thm:fmii1} that there is a Borel isomorphism $\rho :
  \tilde X\to X_0 \cup [0,1)$ (a disjoint union) with $\rho^2(\tilde
  R) = R_0\cup \Rd$ (again, a disjoint union), where $R_0\subseteq
  X_0\times X_0$ is a standard equivalence relation and $R$ is the
  equivalence relation defined at the start of the present section.
  It is easy to check that $\rho^2(\fA(\tilde R)) = \fA(R_0\cup \Rd)$.
  We may thus assume that $\tilde X = X_0 \cup [0,1)$ and $\tilde R =
  R_0\cup \Rd$.  %

  Now suppose that $\fS(\tilde R,\tilde \sigma) = \fA(\tilde R)$.  Let $P
  = P([0,1))$. Given $\phi\in \fS(\Rd)$, let $\psi : \tilde R\to
  \bC$ be its extension defined by letting $\psi(x,y) = 0$ if
  $(x,y)\in R_0$. Then
  \[M(\psi)(T\oplus S) = P M(\psi)(T\oplus S) P = M(\phi)(T)\oplus 0,
  \quad T\in \M(\Rd).\] So $\psi\in \fS(R,\mathbf1)$ and hence
  $\psi\in \fA(\tilde R)$. It now easily follows that $\phi\in
  \fA(\Rd)$, contradicting Corollary~\ref{cor:Rdinclusion}.
\end{proof}

In fact, the only Toeplitz idempotent elements of~$\fS(\Rd):=\fS(\Rd, \mathbf1)$ are
trivial.  To see this, we first explain how $\fS(\Rd)$ can be obtained from
multipliers of the Fourier algebra of a measured groupoid.  We refer
the reader to~\cite{rbook,r} for basic notions and results about
groupoids.

The set $\G=X\times\mathbb D$ becomes a groupoid under the partial product
\[ (x,r_1)\cdot (x+r_1,r_2)=(x,r_1+r_2)\quad\text{for $x\in X$,
  $r_1,r_2\in \bD$}\]
where the set of composable pairs is
\[\G^2=\{\big((x_1,r_1),(x_2,r_2)\big): x_2=
x_1+r_1\}\] and inversion is given by
\[ (x,t)^{-1}=(x+t, -t).\] The domain and range maps in this case are
$d(x,t)=(x,t)^{-1}\cdot(x,t)=(x+t,0)$ and
$r(x,t)=(x,t)\cdot(x,t)^{-1}=(x,0)$, so the unit space, $\G_0$, of
this groupoid, which is the common image of $d$ and $r$, can be
identified with $X$.  Let $\lambda$ be the Haar, that is, the
counting, measure on $\mathbb D$.  The groupoid $\G$ can be equipped
with the Haar system $\{\lambda^x:x\in X\}$, where
$\lambda^x=\delta_x\times \lambda$ and $\delta_x$ is the point mass
at~$x$.
\newcommand{\nug}{\nu_{\G}}

Recall that $\mu$ is Lebesgue measure on $X$. Consider the measure
$\nug$ on~$\G$ given by $\nug=\mu\times\lambda=\int\lambda^xd\mu(x)$.
Since~$\mu$ is translation invariant and~$\lambda$ is invariant under
the transformation $t\mapsto -t$, it is easy to see that
$\nug^{-1}=\nug$, where $\nug^{-1}(E)=\nug(\{e^{-1}\colon e\in
E\})$.%
Therefore $\G$ with the above Haar system and the measure~$\mu$
becomes a measured groupoid.

Consider the map
\[\theta:\Rd\to X\times \bD,\quad \theta(x,x+r)=(x,r),\quad x\in X,\ r\in \bD.\]
Clearly $\theta$ is a continuous bijection (here~$\bD$ is equipped
with the discrete topology). We claim the measure~$\theta_*\nu \colon
E\mapsto \nu(\theta^{-1}(E))$ is equal to~$\nug$, where, as
before,~$\nu$ is the right counting measure for the Feldman-Moore
relation~$(X,\mu,R,\mathbf1)$. Indeed, for~$E\subseteq\G$, we have
\begin{align*}
  (\theta_*\nu)(E)
  &=
  \nu(\theta^{-1}(E))\\
  &=
  \sum_{r\in \bD} \mu(\pi_1(\theta^{-1}(E)\cap \Delta_r))\quad\text{by equation~(\ref{eq:nu})}
  \\&=
  \sum_{r\in \bD} \mu(\pi_1(E\cap (X\times \{r\}))) = (\mu\times \lambda)(E)=\nug(E)
\end{align*}
since it is easily seen that $\pi_1(\theta^{-1}(E)\cap
\Delta_r)=\{x\in X\colon (x,r)\in E\}$.
It follows that  the operator \[U:L^2(\Rd,\nu)\to L^2(\G,\nug),\quad \xi\mapsto \xi\circ \theta^{-1}\] is unitary.

Let $C_c(\G)$ be the space of compactly supported continuous functions
on $\G$. This becomes a $*$-algebra with respect to the convolution
given by
\[(f\ast g)(x,t)=\sum_{r\in \bD} f(x,r)g(x+r,t-r),\]
and involution given by $f^*(x,t)=\overline{f(x+t,-t)}$.

Let $\Reg$ be the representation of $C_c(\G)$ on the Hilbert space
$L^2(\G,\nug)$ given for~$\xi,\eta\in L^2(\G,\nug)$ by
\begin{align*}
\langle \Reg(f)\xi,\eta\rangle& =  \int f(x,t)\xi((x,t)^{-1}(y,s))\overline{\eta(y,s)}d\lambda^{r(x,t)}(y,s)d\lambda^u(x,t)d\mu(u)\\
&=\int f(x,t)\xi(x+t,s-t)\overline{\eta(x,s)}d\lambda(s)d\lambda(t)d\mu(x)\\
&=\int f(x,t)\xi(x+t,s-t)\overline{\eta(x,s)}d\lambda(t)d\nug(x,s)
\end{align*}
hence 
\[(\Reg(f)\xi)(x,s)=
  \int f(x,t)\xi(x+t,s-t)d\lambda(t)=\sum_t f(x,t)\xi(x+t,s-t).\]
In~\cite[Section 2.1]{r}, the von Neumann algebra $\VN(\G)$ of $\G$
is defined to be the bicommutant $\Reg(C_c(\G))''$.

If $f\in C_c(\G)$, then $f\circ \theta$ has a band limited support and
for~$\xi\in L^2(R,\nu)$, we have
\begin{align*}
  (U^*\Reg(f)U\xi)(x,x+t)&=\sum_sf(x,s)\xi(x+s,x+t)\\
  &=\sum_sf(\theta(x,x+s))\xi(x+s,x+t)\label{equivalence}\\
  &=(L(f\circ \theta)\xi)(x,x+t).
\end{align*}
Hence
\begin{equation}
  U^*\Reg(f)U = L(f\circ \theta)\label{equivalence}
\end{equation}
and so $\VN(\G)$ is spatially isomorphic to $\M(\Rd)$.

The von Neumann algebra $\VN(\G)$ is the dual of the Fourier algebra
$A(\G)$ of the measured groupoid $\G$, which is a Banach algebra of
complex-valued functions on $\G$.  If the operator $M_\phi$ on $A(\G)$
of multiplication by the function $\phi\in L^\infty(\G)$ is bounded,
then its adjoint $M_\phi^*$ is a bounded linear map on $\VN(\G)$.
Moreover, in this case we have $M_\phi^*\Reg(f)=\Reg(\phi f)$, for
$f\in C_c(\G)$.  The function $\phi$ is then called a multiplier of
$A(\G)$ \cite{r} and we write $\phi\in MA(\G)$.  If the map $M_\phi$
is also completely bounded then $\phi$ is called a completely bounded
multiplier of $A(\G)$ and we write $\phi\in M_0A(\G)$.  By
equation~(\ref{equivalence}) and Remark~\ref{r_autcb}, we have
\begin{equation}
  \phi\in M_0A(\G)
  \iff \phi\circ \theta\in \fS(R,\mathbf1).\label{eq:renault}
\end{equation}

We are now ready to prove the following statement:

\begin{proposition}\label{prop:dyadmult}
  If $D\subseteq \bD$, then the following are equivalent:
  \begin{enumerate}
  \item The function $\chi_{\Delta(D)}\in L^\infty(\Rd,\nu)$ is in
    $\fS(\Rd)$.
  \item The function $\chi_D\in \ell^\infty(\bD)$ is in the
    Fourier-Stieltjes algebra $B(\bD)$ of $\bD$.
  \item $D$ is in the coset ring of $\bD$.
  \end{enumerate}
\end{proposition}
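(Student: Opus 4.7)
The strategy is to link all three conditions through the crossed-product picture $\R\cong L^\infty(X)\rtimes\bD$ and the groupoid correspondence of equation~(\ref{eq:renault}), reducing (1)$\iff$(2) to the analysis of Fourier multipliers on the discrete abelian group $\bD$ and (2)$\iff$(3) to Cohen's idempotent theorem.

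For (1)$\iff$(2), write $u_r=V(f_r)$ where $f_r(x)=x+r$, so that $s(u_r)=\chi_{\Delta_r}$. A formal element $T=\sum_{r\in\bD}a_ru_r$ of $\R$ has symbol $s(T)(x,x+r)=a_r(x)$, and pointwise multiplication by $\chi_{\Delta(D)}$ selects precisely the terms with $r\in D$:
\[
M(\chi_{\Delta(D)})\!\left(\sum_{r} a_r u_r\right)=\sum_{r\in D} a_r u_r.
\]
In particular, $M(\chi_{\Delta(D)})$ restricts to the von Neumann subalgebra $\VN(\bD)\subseteq\R$ generated by $\{u_r:r\in\bD\}$ as the Fourier multiplier $u_r\mapsto\chi_D(r)u_r$. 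For (1)$\Rightarrow$(2): by Theorem~\ref{th_main} together with Remark~\ref{r_autcb}, this restriction is a normal completely bounded Fourier multiplier of $\VN(\bD)$, which forces $\chi_D\in M_0A(\bD)$; since $\bD$ is discrete and abelian (hence amenable), $M_0A(\bD)=B(\bD)$. For (2)$\Rightarrow$(1): the projection $\pi\colon\G\to\bD$, $(x,r)\mapsto r$, is a groupoid homomorphism, so if $\chi_D\in B(\bD)$ then $\chi_D\circ\pi\in B(\G)\subseteq M_0A(\G)$; by~(\ref{eq:renault}) and the identification $\chi_{\Delta(D)}=(\chi_D\circ\pi)\circ\theta$, we obtain $\chi_{\Delta(D)}\in\fS(R,\mathbf1)$.

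For (2)$\iff$(3), I would invoke Cohen's idempotent theorem: for a locally compact abelian group $G$, the idempotents in $B(G)$ are exactly the characteristic functions of the sets in the coset ring of $G$. Applied to $G=\bD$, this yields the desired equivalence directly.

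The main technical point is the pullback argument in (2)$\Rightarrow$(1): one needs to know that Fourier-Stieltjes functions on $\bD$ indeed pull back via the groupoid homomorphism $\pi$ to elements of $M_0A(\G)$. If one prefers to bypass the measured-groupoid Fourier algebra, an equivalent route is to extend the Herz-Schur multiplier $u_r\mapsto\chi_D(r)u_r$ from $\VN(\bD)$ to a normal completely bounded $\A$-bimodule map on the crossed product $\R=L^\infty(X)\rtimes\bD$ by the standard "tensoring with the identity of~$\A$" construction, and then verify using Theorem~\ref{th_main} that this extension coincides with $M(\chi_{\Delta(D)})$ on the generating set $\A\cup\{u_r:r\in\bD\}$, hence everywhere by normality.
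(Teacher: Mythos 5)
Your proof is correct, and for one direction it is genuinely different from the paper's. The paper proves (1)$\iff$(2) in a single stroke by factoring $\chi_{\Delta(D)}=\chi_D\circ\pi\circ\theta$ and citing Renault's result that, for the projection homomorphism $\pi\colon\G\to\bD$, one has $\chi_D\in M_0A(\bD)\iff\chi_D\circ\pi\in M_0A(\G)$, combined with the equivalence~(\ref{eq:renault}); it then invokes Cohen's idempotent theorem for (2)$\iff$(3), exactly as you do. Your (1)$\Rightarrow$(2) argument -- restricting $M(\chi_{\Delta(D)})$ to the copy of $\VN(\bD)$ generated by the unitaries $u_r=V(f_r)$ and reading off a completely bounded Fourier multiplier -- replaces the ``only if'' half of Renault's proposition by a more elementary and self-contained argument; the only points to make explicit are that $M(\chi_{\Delta(D)})(\VN(\bD))\subseteq\VN(\bD)$ (which follows from normality via Theorem~\ref{th_main} together with the weak* density of $\spn\{u_r\}$ in $\VN(\bD)$), that the restriction of a completely bounded map to a subalgebra is completely bounded (using Remark~\ref{r_autcb}), and the standard identity $M_0A(\bD)=B(\bD)$ for amenable groups. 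For (2)$\Rightarrow$(1) your primary route (pulling back along $\pi$) still leans on Renault's machinery just as the paper does, but your proposed alternative -- extending the Herz--Schur multiplier $u_r\mapsto\chi_D(r)u_r$ to the crossed product $L^\infty(X)\rtimes\bD$ by the canonical construction, checking it is a normal $\A$-bimodule map, and identifying it as $M(\chi_{\Delta(D)})$ via Theorem~\ref{th_main} -- is sound and would make the whole equivalence independent of the measured-groupoid Fourier algebra, at the cost of importing the (standard but nontrivial) fact that elements of $B(G)$ induce normal completely bounded maps on crossed products. In short: your version trades one external citation (Renault) for another (crossed-product extension of Fourier--Stieltjes multipliers), and gains a more hands-on proof of (1)$\Rightarrow$(2); the paper's version is shorter because Renault's proposition delivers both implications at once.
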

\begin{proof}

  To see that $(1)$ and~$(2)$ are equivalent, observe that if
  $\pi:\G\to \mathbb D$, $(x,t)\mapsto t$ is the projection
  homomorphism of $\G$ onto $\mathbb D$, then
  \[\chi_{\Delta(D)}=\chi_{D}\circ\pi\circ\theta.\]
  Moreover, since~$\bD$ is commutative, we have $B(\bD)=M_0 A(\bD)$. So
  \begin{align*}
    \chi_D\in B(\bD)&\iff \chi_D\in M_0A(\bD)
    \\&\iff \chi_D\circ \pi \in M_0A(\G)\ \text{by~\cite[Proposition~3.8]{r}}\\
    &\iff \chi_{\Delta(D)}=\chi_D\circ \pi\circ \theta\in \fS(R,\mathbf1)\ \text{by~(\ref{eq:renault}).}
  \end{align*}
  The equivalence of $(2)$ and $(3)$ follows
  from~\cite[Chapter~3]{rudin-fag}.
\end{proof}

\begin{theorem}\label{th_ch}
  The only elements of $\fA(\Rd)$ of the form $\chi_{\Delta(D)}$ for
  some $D\subseteq \bD$ are $0$ and $1$.
\end{theorem}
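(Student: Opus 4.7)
The plan is to argue by contradiction, combining the three earlier results of Section~\ref{s_h21}: the inclusion $\fA(\Rd) \subseteq \fS(\Rd,\mathbf{1})$ from Theorem~\ref{p_aeha}, the coset-ring characterization of Proposition~\ref{prop:dyadmult}, and the density obstruction in Proposition~\ref{prop:dyad-A(Rd)}(1). Suppose $\chi_{\Delta(D)} \in \fA(\Rd)$ for some $D \subseteq \bD$ with $D \ne \emptyset$ and $D \ne \bD$; I will derive a contradiction.

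First, Theorem~\ref{p_aeha} gives $\chi_{\Delta(D)} \in \fS(\Rd,\mathbf{1})$, so Proposition~\ref{prop:dyadmult} forces $D$ to lie in the coset ring of~$\bD$. The key algebraic step will be to observe that this coset ring is very restricted. As an abstract group, $\bD$ is the Pr\"ufer $2$-group, and its subgroups form the single chain $\{0\} \subsetneq \bD_1 \subsetneq \bD_2 \subsetneq \cdots \subsetneq \bD$. Hence every proper subgroup of~$\bD$ is one of the finite groups $\bD_n$, so every coset of a proper subgroup is a finite subset of $\bD$, while the only coset of $\bD$ is $\bD$ itself. A routine check shows that the Boolean algebra generated by the finite subsets of $\bD$ (we already obtain all singletons as cosets of $\{0\}$) together with $\bD$ consists exactly of the subsets of $\bD$ that are either finite or have finite complement.

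Therefore $D$ is finite or cofinite in $\bD$. Since $\emptyset \ne D \ne \bD$, at least one of $D, \bD \setminus D$ is cofinite in $\bD$; because $\bD$ is dense in $[0,1)$, removing a finite set preserves density, so this cofinite set is dense in $[0,1)$. Proposition~\ref{prop:dyad-A(Rd)}(1) then yields $\chi_{\Delta(D)} \notin \fA(\Rd)$, contradicting our assumption. Hence $D \in \{\emptyset, \bD\}$, giving $\chi_{\Delta(D)} \in \{0, 1\}$ as required.

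The only substantive obstacle is the algebraic description of the coset ring of $\bD$. Once the subgroup structure of the Pr\"ufer $2$-group is invoked, the rest is a direct assembly of results already established earlier in Section~\ref{s_h21}.
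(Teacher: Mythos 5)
Your argument is correct and follows essentially the same route as the paper's proof: apply Theorem~\ref{p_aeha} to get $\chi_{\Delta(D)}\in\fS(\Rd)$, use Proposition~\ref{prop:dyadmult} to place $D$ in the coset ring of $\bD$, note that all proper subgroups of $\bD$ are finite so the coset ring is the finite/cofinite sets, and conclude via density and Proposition~\ref{prop:dyad-A(Rd)}(1). Nothing is missing.
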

\begin{proof}
  If $\chi_{\Delta(D)}\in \fA(\Rd)$ then $\chi_{\Delta(D)}\in \fS(\Rd)$ by
  Proposition~\ref{p_aeha}, so $D$ is in the coset ring of $\bD$ by
  Proposition~\ref{prop:dyadmult}. All proper subgroups of $\bD$ are
  finite, so $D$ is in the ring of finite or cofinite subsets of
  $\bD$. Hence either $\bD\setminus D$ or $D$ is dense in $[0,1)$,
  so either $D=\emptyset$ or $D = \bD$ by
  Proposition~\ref{prop:dyad-A(Rd)}.
\end{proof}

\begin{remark}
  We note that there exist non-trivial idempotent elements of
  $\fA(R)$. For example, if~$\alpha,\beta$ are measurable subsets
  of~$X$, then the characteristic function of $(\alpha\times
  \beta)\cap \Rd$ is always idempotent. Note that the sets of the form
  $(\alpha\times\beta)\cap \Rd$ are not unions of full diagonals
  unless they are equivalent to either $\Rd$ or the empty set.
\end{remark}

\smallskip

\end{document}